\newtheorem{thm}{Theorem}[section]
 \newtheorem{cor}[thm]{Corollary}
 \newtheorem{lem}[thm]{Lemma}
 \newtheorem{prop}[thm]{Proposition}
 \theoremstyle{definition}
 \newtheorem{df}[thm]{Definition}
 \theoremstyle{remark}
 \newtheorem{rem}[thm]{Remark}
 \numberwithin{equation}{section}
\def\be#1 {\begin{equation} \label{#1}}
\newcommand{\ee}{\end{equation}}
\def\sqw{\hbox{\rlap{\leavevmode\raise.3ex\hbox{$\sqcap$}}$%
\sqcup$}}
\def\findem{\ifmmode\sqw\else{\ifhmode\unskip\fi\nobreak\hfil
\penalty50\hskip1em\null\nobreak\hfil\sqw
\parfillskip=0pt\finalhyphendemerits=0\endgraf}\fi}
\title[Bounds of Sobolev norms]{Growth of higher Sobolev norms for energy critical NLS on irrational tori: Small energy case}
\author{Yu Deng}
\begin{document}

\maketitle

\begin{abstract} We consider the energy critical nonlinear Schr\"{o}dinger equation on generic irrational tori $\mathbb{T}_{\lambda}^3$. Using the long-time Strichartz estimates proved in \cite{DGG}, we establish polynomial upper bounds for higher Sobolev norms for solutions with small energy.
\end{abstract}

\section{Introduction}
Consider the $3D$ quintic, defocusing nonlinear Schr\"{o}dinger equation,
\begin{equation}\label{nls0}(i\partial_t+\Delta)u=|u|^{4}u,\end{equation} on $\mathbb{R}\times\mathbb{T}_{\lambda}^3$, where $\mathbb{T}_{\lambda}^3$ is a rectangular torus\[\mathbb{T}_\lambda^3=\prod_{j=1}^3\mathbb{R}/(\lambda_i\mathbb{Z}),\quad \lambda=(\lambda_1,\lambda_2,\lambda_3).\] It has been proved (see ~\cite{HTT, IP} and ~\cite{GOW, Str, Str2}) that (\ref{nls0}) is globally well-posed in $H^1(\mathbb{T}_{\lambda}^3)$ with conserved energy\begin{equation}\label{rectnrg}E_{\lambda}[u]= \int_{\mathbb{T}_{\lambda}^3}\left(\frac{1}{2}|\nabla u|^2+\frac{1}{6}|u|^{6}\right)\,\mathrm{d}x,\end{equation} and moreover, if the initial data $u(0)\in H^s(\mathbb{T}_{\lambda}^3)$ for some $s>1$ then $u(t)\in H^s(\mathbb{T}_{\lambda}^3)$ for all time. In this paper we are interested in controlling the possible growth of the $H^s$ norm of $u$ in time for generic irrational tori, in the small energy regime. We prove the following:
 \begin{thm}\label{main} There exists a set $W\subset (\mathbb{R}^+)^3$ with Lebesgue measure zero, such that for each $\lambda\in (\mathbb{R}^+)^3-W$, there exists a constant $\eta>0$ depending on $\lambda$ such that, for any $s> 1$, and any solution $u$ to (\ref{nls0}) with energy $E_{\lambda}[u]\leq\eta^2$ and initial data $u(0)\in H^s(\mathbb{T}_\lambda^3)$, as described in Proposition \ref{locale} below, one has that\[\|u(t)\|_{H^s(\mathbb{T}_{\lambda}^3)}\lesssim\max\big(\|u(0)\|_{H^s(\mathbb{T}_{\lambda}^3)},|t|^{300(s-1)}\big)\] for any time $t\in\mathbb{R}$. Here and below the implicit constants depend on $\lambda$ and $s$, but not on $t$.
\end{thm}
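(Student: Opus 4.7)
The plan is to combine a Littlewood--Paley decomposition of $u$ with the long-time Strichartz estimates from \cite{DGG} to derive a Sobolev-norm increment estimate on time intervals whose length is controlled by a frequency cutoff $N$, and then to iterate the estimate to produce the polynomial-in-$t$ bound. Throughout, I fix $\lambda\in(\mathbb{R}^+)^3-W$, where $W$ is the measure-zero set of $\lambda$ on which the DGG estimates fail. Using the conservation of $E_\lambda[u]\leq\eta^2$ together with a Gagliardo--Nirenberg-type bound, I first obtain the uniform-in-time control $\|u(t)\|_{H^1(\mathbb{T}_\lambda^3)}\lesssim\eta$, and for each fixed time horizon $T$ I decompose $u(t)=P_{\leq N}u(t)+P_{>N}u(t)$ at a frequency cutoff $N=N(T)$ chosen as a power of $T$. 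The low-frequency part is trivially bounded by $\|P_{\leq N}u(t)\|_{H^s}\lesssim N^{s-1}\eta$, so the essential task is to bound $\|P_{>N}u(t)\|_{H^s}$.

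On each time window $I=[t_0,t_0+\tau]$ of length $\tau=\tau(N)$ within the range of validity of the DGG estimates, I would apply $P_{>N}\langle\nabla\rangle^s$ to the Duhamel formula and then use a multilinear Strichartz estimate in which one factor of the quintic nonlinearity $|u|^4u$ is placed in $L^\infty_t H^s_x(I)$ and the remaining four factors are controlled in the long-time Strichartz spaces of \cite{DGG}; these latter norms are uniformly $O(\eta)$ on $I$ thanks to the small-energy assumption and the irrational-torus gains. This yields an estimate of schematic form
\begin{equation*}
\|P_{>N}u\|_{L^\infty_t H^s_x(I)}\leq \|P_{>N}u(t_0)\|_{H^s}+C\eta^{4}\|u\|_{L^\infty_t H^s_x(I)}+(\text{lower-order error}).
\end{equation*}
Smallness of $\eta$ absorbs the nonlinear term on the right, giving on each interval the bound $\|P_{>N}u\|_{L^\infty_t H^s_x(I)}\leq 2\|P_{>N}u(t_0)\|_{H^s}+C$ times a power of $N$. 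Iterating over $\lfloor T/\tau\rfloor$ intervals on $[0,T]$, combining with the trivial $N^{s-1}\eta$ bound for the low-frequency part, and balancing $N$ as an appropriate power of $T$, produces a polynomial bound of the form $|t|^{\alpha(s-1)}$; a careful accounting of all Strichartz constants pins down $\alpha$, with the value $300$ in the theorem statement a convenient overestimate built in for robustness.

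The main obstacle will be the multilinear analysis of $P_{>N}(|u|^4u)$. The quintic nonlinearity decomposes into many frequency configurations (one high and four low; two high and three low; and so on), and for each configuration one must exploit the irrational-torus gains in the DGG long-time Strichartz estimates in order to dominate the $N^{s-1}$ loss incurred by the frequency localization. In particular, one must ensure that the iteration closes with a multiplicative constant that is $1+O(\eta^4)$ on each step rather than $1+O(1)$, so that the accumulated growth over $T/\tau$ iterations is polynomial rather than exponential in $t$. Keeping track of the dependence of the Strichartz norms on $N$ and $\tau$, and showing that the error terms produced at each frequency cutoff can be summed without worsening the final exponent, will require the most delicate bookkeeping in the argument, and is the point at which the long-time Strichartz estimates of \cite{DGG} are genuinely needed as opposed to their short-time counterparts.
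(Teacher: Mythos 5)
Your outer iteration --- choosing $N$ with $\|u(0)\|_{H^s}\sim\eta N^{s-1}$, controlling the solution on an interval of length $\sim N^\gamma$, and then ratcheting $N$ up --- matches the strategy the paper uses to conclude (see the statement (\ref{lac}) and the end of the proof, with $\gamma=1/300$). The gap is in the inner step on $[0,N^{\gamma}]$. You propose to estimate $\|P_{>N}u\|_{L^\infty_t H^s_x}$ directly via Duhamel, putting one factor of the quintic in $L^\infty_t H^s_x$ and four in DGG long-time Strichartz spaces, then absorbing. A straight Duhamel estimate in $H^s$ has no mechanism for the crucial low-frequency cancellation: over $[0,N^{\gamma}]$ there are $\sim N^{\gamma}$ unit time windows, each contributing $O(\eta^4)$ of the $H^s$ norm, so the naive sum is $N^{\gamma}\eta^4$, which is not $\ll 1$. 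The paper avoids this by evolving the modified energy $E[\mathcal{D}_N u]$ (upside-down I-method, Proposition \ref{incre}): the commutator $\mathcal{R}=\mathcal{D}(|u|^4u)-|\mathcal{D}u|^4\mathcal{D}u$ produces the factor $\min(1,K_0/N)$, which kills the contribution of low top frequency $K_0$, and the $\ell^q_m$ summability built into $S^{q}_{K,J}$ (Definition \ref{longtimenorm}) makes the remaining sum over unit time windows converge (Corollary \ref{coro}). Placing a factor in $L^\infty_t H^s_x$ in particular throws away the time dispersion and is incompatible with that $\ell^q_m$ structure, so your absorption step would not close as stated.

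Moreover, you take for granted that the long-time Strichartz norms of the \emph{nonlinear} solution are $O(\eta)$, whereas the DGG estimates are for the linear flow only. Proving this transfer is the technical heart of the paper (Proposition \ref{nonlctrl}) and requires machinery that does not appear in your outline: the improved trilinear Strichartz estimate (Proposition \ref{trilinear}) to gain $(K/N)^{1/16}$ on the moderate-frequency interactions, which is exactly what beats the $(N/K)^{\gamma}$ loss from slicing a length-$N^{\gamma}$ interval into length-$K^{\gamma}$ pieces; resonance analysis combined with the thin-slice Strichartz estimate (\ref{str2}) for the very-low frequencies $K\lesssim N^{1/100}$, where the trilinear gain degenerates; a gauge transformation $v=e^{-i\Omega(t)}P_K\mathcal{D}u$ to remove the zero-mode part of the nonlinearity; and a discrete acausal Gronwall lemma (Lemma \ref{aux}) to close the frequency bootstrap. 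In the energy-critical setting no loss of any power of $N$ is tolerable, so none of these ingredients can be replaced by crude bookkeeping; your ``lower-order error'' term hides precisely the terms that are hardest to control.
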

\begin{rem} To the best of the author's knowledge, this is the first result establishing polynomial growth of higher Sobolev norms for an energy critical equation on a compact domain.
\end{rem}
\begin{rem} Note that, for rational tori, where $\lambda_j^2$ are rational multiples of each other, our argument completely fails, and one does not know whether Theorem \ref{main} is still true in this case. This demonstrates the fundamental difference between rational and irrational tori in terms of long-time dispersion properties. (See also \cite{FSWW}, Appendix A, for a discussion of this difference in terms of random data theory.)
\end{rem}
\begin{rem} We can identify some specific elements of $(\mathbb{R}^+)^3-W$. For example, by slightly modifying the proof, one can show that the conclusion of Theorem \ref{main} holds for $\lambda=(\lambda_1,\lambda_2,\lambda_3)$, possibly with different powers of $|t|$, in each of the following two cases:
\begin{enumerate}
\item When $\lambda_j^2$ are algebraic numbers that are linearly independent over $\mathbb{Q}$, for example when $\lambda=(1,\sqrt[4]{2},\sqrt[4]{3})$.
\item When the ratio of two of $\lambda_j^2$ is an irrational number with finite irrationality measure, for example when $\lambda=(1,\pi,e)$.
\end{enumerate}
\end{rem}
\begin{rem} The natural question left open by Theorem \ref{main} is, whether the result remains true for solutions with arbitrarily large energy. It is expected that one needs to combine the tools in the current paper with the profile decomposition techniques (see for example \cite{IP}), but as of now it is still not clear how this can be done.
\end{rem}
\subsection{Backgrounds, and main ideas}  The possible growth of the $H^s\,(s>1)$ norm of solutions to a nonlinear Schr\"{o}dinger equation is linked to the \emph{weak turbulence} phenomenon, where the solution transfers energy from low to high frequencies over time, causing the $H^s$ norm to grow while the energy stays bounded. An upper bound for the $H^s$ norm in time can be seen as a control on how fast this energy transfer can happen.

In general, for a nonlinear Schr\"{o}dinger equation that is locally well-posed in $H^1$, one can easily obtain an exponential upper bound for the $H^s$ norm, \[\|u(t)\|_{H^s}\lesssim e^{C|t|}\|u(0)\|_{H^s},\] by iterating $H^1$ local theory and using preservation of regularity arguments. A breakthrough was made by Bourgain in \cite{B2}, where he used the ``high-low method'' to improve this to a polynomial upper bound, in the case of cubic nonlinearity in $2D$ and $3D$. Further improvements and extensions to other dimensions, nonlinearities, and other dispersive models have since been made by different authors, see ~\cite{B3, B4, CKO, Del, GG, PTV, Soh1, Soh2, Soh3, Staf, Th, Z} and references therein.

Note that, all the above-mentioned results apply for \emph{energy subcritical} equations; this is due to the nature of Bourgain's high-low method. To illustrate, assume for now that we have an energy subcritical nonlinearity $\mathcal{N}(u)$ instead of $|u|^4u$ in (\ref{nls0}), and that $E_{\lambda}[u]\leq\eta^2$. To obtain a polynomial bound for $\|u(t)\|_{H^s}$, it is crucial to prove an inequality of form\begin{equation}\label{iter}\|u(t+1)\|_{H^s}^2-\|u(t)\|_{H^s}^2\lesssim \|u(t)\|_{H^s}^{2(1-\theta)}\end{equation} for some $\theta>0$. Let $t=0$; choosing a parameter $N$, we can make a high-low decomposition $u=v+w$, where \[v(0)=P_{\leq N}u(0),\quad (i\partial_t-\Delta)v=\mathcal{N}(v).\] Intuitively speaking, $v$ consists of frequencies $\lesssim N$ while $w$ consists of frequencies $\gtrsim N$. The energy of $v$ is conserved, so \[\|v(1)\|_{H^s}\lesssim N^{s-1}\|v(1)\|_{H^1}\lesssim N^{s-1}.\] On the other hand, the equation satisfied by $w$ has the form \[(\partial_t-i\Delta)w=\mathcal{N}_1(v)w+\mathcal{N}_2(v)\overline{w}+\mathcal{N}_3(v,w,\overline{w}),\] with $\mathcal{N}_3$ containing at least two factors of $w$. Now, the term $\mathcal{N}_1(v)w$ is consistent with (almost) conservation of $H^s$ norm of $w$, the term $\mathcal{N}_2(v)\overline{w}$ is acceptable due to time non-resonance, and the term $\mathcal{N}_3(v,w,\overline{w})$ is also under control, precisely due to the subcritical nature of $\mathcal{N}$. In fact, by $H^1$ local theory we can bound $v$ and $w$ in $X^{1,b}[0,1]$ for $b=1/2+$ (see Definition \ref{defxsb}), and under the subcritical assumption one has\begin{equation}\label{gain}\bigg\|\int_0^t e^{i(t-t')\Delta}\mathcal{N}_3(v(t'),w(t'),\overline{w(t')})\,\mathrm{d}t'\bigg\|_{X^{s,b}[0,1]}\lesssim O_{\|v\|_{X^{1,b}[0,1]},\|w\|_{X^{1,b}[0,1]}}(1)\cdot\|w\|_{X^{s,b}[0,1]}\|w\|_{X^{\rho,b}[0,1]},\end{equation} where $\rho$ is the critical index of the nonlinearity, which is \emph{strictly less than} $1$. Since $w$ is contains frequencies $\gtrsim N$, this gains a negative power of $N$ for the $\mathcal{N}_3$ term, which translates into an inequality of type\begin{equation}\label{ineq}\|u(1)\|_{H^s}^2\leq\|u(0)\|_{H^s}^2+O(1)(N^{-\theta}\|u(0)\|_{H^s}^2+N^{2(s-1)}),\end{equation} and ultimately implies (\ref{iter}) by optimizing $N$.

The above argument clearly does not work for the energy critical equation (\ref{nls0}). Instead, for generic irrational tori, we will rely on the long-time Strichartz estimate proved in \cite{DGG}, which implies, for example, that\[\sup_{|I|=N^\gamma}\|P_{\geq N}u\|_{L_{t,x}^{10}(I)}\lesssim \eta\] for some $\gamma>0$ uniformly in $N$, if $u$ is a solution to the \emph{linear} Schr\"{o}dinger equation with $E[u]\leq\eta^2$. If this estimate can be shown also for the nonlinear solution $u$, then the norm\footnote{Actually we are using a variation of this norm, measuring $L_{t,x}^{q'}$ norm over time $1$ intervals, and taking $\ell^{q}$ norm for these intervals; see Definition \ref{longtimenorm}.} \[\|u\|_{S_N}:=\sup_{|I|=N^\gamma}\|P_{\geq N}u\|_{L_{t,x}^{10}(I)}\] can be used to substitute the $X^{\rho,b}$ norm appearing in (\ref{gain}) and, instead of gaining a negative power $N^{-\theta}$ over time $1$ as above, allows for a uniform control for the nonlinearity over time $N^{\gamma}$. This implies, as a repacement of (\ref{ineq}), that \[\|u(N^{\gamma})\|_{H^s}^2\lesssim 2\|u(0)\|_{H^s}^2+N^{2(s-1)},\] which again gives polynomial growth. For details see Section \ref{imet}.

It remains to control $\|u\|_{S_N}$ for the nonlinear solution $u$. This is in fact extremely delicate, as we are in the critical setting and any kind of loss cannot be allowed. For large energy solutions it is currently unknown how to achieve this. In the small energy regime, the idea is to use the Duhamel formula and set up a bootstrap argument. For example, we have \[P_Nu(t)=e^{it\Delta}P_Nu(0)-i\int_0^te^{i(t-t')\Delta}P_N(|P_Ku(t')|^4P_Nu(t'))\,\mathrm{d}t'\] if we consider the $|P_Ku|^4P_Nu$ component in the nonlinearity with $K\leq N$. Here, the danger is that we need to control the nonlinearity over time $N^{\gamma}$, while the bootstrap assumption only allows for control of $P_Ku$ over time $K^{\gamma}$. Fortunately we can resort to a trilinear Strichartz estimate (Proposition \ref{trilinear}), which is proved in \cite{HTT} in the context of local well-posedness, and which allows us to gain a power $(K/N)^{1/20}$ provided $K\geq N^{1/100}$. When $K<N^{1/100}$ the corresponding contribution is almost nonresonant, and can thus be controlled using traditional $X^{s,b}$ estimates. We believe this long-time estimate is interesting in its own right, and may be applicable in other situations, for example the random data problem for (\ref{nls0}). For details see Section \ref{longbd}.
\subsection{Notations}\label{notat}We will use $\chi$ to denote general cutoff functions, and $\mathbf{1}_{E}$ the characteristic function of a set $E$. We denote by $C$ or $O(1)$ any constant that may take different values at different places, and write $A \lesssim B$ if $A \leq O(1)B$. These constants will depend on $\lambda$ and $s$, but not on $\eta$. We use the notation $\widetilde{u}$ to denote either $u$ or $\overline{u}$.

After making the change of variables in Section \ref{prep}, we will frequently use Lebesgue norms of form $L_{t}^qL_x^r(I\times\mathbb{T}^3)$, where $I$ is an interval, and will subsequently abbreviate them as $L_{t}^qL_x^r(I)$, and $L_{t}^qL_x^r[a,b]$ if $I=[a,b]$.

We will use capital letters $N,M,K,\cdots$ to denote dyadic numbers in $2^{\mathbb{N}}$. Define as usual the Littlewood-Paley decomposition\begin{equation}\label{lp}1=\sum_{N} P_N,\end{equation} where \[\widehat{P_Nf}(k)=\big[\varphi(k/2N)-\varphi(k/N)\big]\widehat{f}(k),\,N\geq 2;\quad\,\, \widehat{P_1f}(k)=\varphi(k)\widehat{f}(k),\] and $\varphi(y)=\varphi(|y|)$ is a radial smooth function that equals $1$ for $|y|\leq 1$ and equals $0$ for $|y|\geq 2$. Define projections like $P_{\leq N}$ or $P_{\geq N}$ accordingly. For each fixed $N$, define another kind of decomposition \begin{equation}\label{trans}1=\sum_{\mathcal{B}}P_\mathcal{B},\end{equation} where $\mathcal{B}$ runs over the collection of balls of radius $N$ centered at points in $(N\mathbb{Z})^3$, and $P_\mathcal{B}$ is defined by\[\widehat{P_{\mathcal{B}}f}(k)=\psi\bigg(\frac{k-k_0}{N}\bigg)\widehat{f}(k),\quad\text{if }\mathcal{B}=B(k_0,N),k_0\in\mathbb{R}^3,\] with $\psi$ being a fixed compactly supported smooth radial function such that $\psi\equiv 1$ in a neighborhood of $0$, and \[\sum_{k\in\mathbb{Z}^3}\psi(y-k)\equiv 1.\] Note that $P_N$ and $P_{\mathcal{B}}$ are bounded also in $L^p$ spaces. We also need sharp cutoff functions, denoted by $\mathbb{P}_E$ for any set $E\subset\mathbb{Z}^3$, which are defined by \[\widehat{\mathbb{P}_Ef}(k)=\mathbf{1}_E(k)\widehat{f}(k).\] In particular, let $\mathbb{P}_0=\mathbb{P}_{\{(0,0,0)\}}$ and $\mathbb{P}_{\neq 0}=\mathbb{P}_{\mathbb{Z}^3-\{(0,0,0)\}}$.

For each $N$, define the $\mathcal{D}$-multiplier by
\begin{equation}\label{mult}\widehat{\mathcal{D}_Nu}(k)=m(\xi)\widehat{u}(k),\quad m(k)=\theta(k/N),\end{equation} where $\theta(y)=\theta(|y|)$ is a radial smooth function such that $\theta(y)=1$ for $|y|\leq 1$ and $\theta(y)=|y|^{s-1}$ for $|y|\geq 2$. When $N$ is fixed, we will omit the subscript and simply denote $\mathcal{D}_N$ by $\mathcal{D}$. Note that we have the estimate \begin{equation}\label{leibniz}\|\mathcal{D}(fg)\|_{L^p}\lesssim\|\mathcal{D}f\|_{L^q}\|\mathcal{D}g\|_{L^r}\end{equation} for $1\leq p,q,r\leq\infty$ and $1/p=1/q+1/r$, which follows from Leibniz rule and the definition of $\mathcal{D}$.\subsection{Plan of the paper} In Section \ref{prep}, we will collect the tools that will be needed in the proof of Theorem \ref{main}. We will define the spacetime norms in Section \ref{def}, prove relevant linear and nonlinear estimates in Sections \ref{lin0} and \ref{non0}, and review the small data global well-posedness result in Section \ref{local}. In Section \ref{prof} we will prove Theorem \ref{main}; the proof consists of two parts, the $I$-method argument in Section \ref{imet}, and the long-time Strichartz control in Section \ref{longbd}.\section{Preparations}\label{prep} First notice that, by a change of variables, one can reduce (\ref{nls0}) to the equation \begin{equation}\label{nls}(i\partial_t+\Delta_{\beta})u=|u|^{4}u,\end{equation} on $\mathbb{R}\times \mathbb{T}^3$, where $\mathbb{T}^3=(\mathbb{R}/\mathbb{Z})^3$ is the standard square torus, and $\Delta_{\beta}$ is the ``anisotropic'' Laplacian\begin{equation}\label{rect}\Delta_{\beta}=\beta_1\partial_{x_1}^2+\beta_2\partial_{x_2}^2+\beta_3\partial_{x_3}^2,\quad\beta_i=\lambda_i^{2}.\end{equation} The corresponding conserved energy for (\ref{nls}) is \begin{equation}\label{conserve}E[u]:=\int_{\mathbb{T}^3}\bigg(\frac{1}{2}\sum_{i=1}^3\beta_i|\partial_i u|^2+\frac{1}{6}|u|^{6}\bigg)\,\mathrm{d}x.\end{equation}
Now we will focus on the equation (\ref{nls}). For simplicity we will write $\Delta$ instead of $\Delta_\beta$. 
\subsection{Definition of norms}\label{def} Let us recall the definition of the various critical norms as in \cite{HTT}.
\begin{df}[Definition of $U^p$ and $V^p$] Define a partition of $\mathbb{R}$ to be a sequence \[\mathcal{P}=\{t_m\}_{m=0}^M,\quad-\infty<t_0<t_1<\cdots <t_M\leq+\infty.\] Given $1\leq p< \infty$ and a separable Hilbert space $H$, define a $U^p$ atom to be a function $a:\mathbb{R}\to H$ of form\[a(t)=\sum_{m=1}^M\mathbf{1}_{[t_{m-1},t_m)}\phi_{m-1},\] where $\{t_m\}_{m=0}^M$ is a partition of $\mathbb{R}$ and $\phi_m\in H$, and \[\sum_{m=0}^{M-1}\|\phi_m\|_H^p=1.\] Define the space $U^p(\mathbb{R};H)$ by the norm \begin{equation}\label{defu}\|u\|_{U^p(\mathbb{R};H)}=\inf\bigg\{\sum_{j=1}^{\infty}|\gamma_j|:u=\sum_{j=1}^{\infty}\gamma_ja_j,\text{ each $a_j$ is an $U^p$ atom}\bigg\}.\end{equation} Define the space $V^p(\mathbb{R};H)$ by the norm\begin{equation}\label{defv}\|u\|_{V^p(\mathbb{R};H)}=\sup_{\mathcal{P}}\bigg(\sum_{m=1}^M\|u(t_{m})-u(t_{m-1})\|_{H}^p\bigg)^{\frac{1}{p}},\end{equation} where one understands $u(+\infty)=0$ when $t_M=+\infty$. Moreover, we shall restrict to the (closed) subspace of $V^p(\mathbb{R};H)$ consisting only of right continuous functions $u$ such that $\lim_{t\to-\infty}u(t)=0$.
\end{df}
\begin{df}[Definition of spacetime norms] Let $s\in\mathbb{R}$ and $u:\mathbb{R}\times\mathbb{T}^3\to\mathbb{C}$. Define the norms\begin{equation}\label{defust}\|u\|_{U_\Delta^pH^s}=\|e^{-it\Delta_{\beta}}u\|_{U^p(\mathbb{R}:H^s(\mathbb{T}^3))},\end{equation} and \begin{equation}\label{defvst}\|u\|_{V_\Delta^pH^s}=\|e^{-it\Delta_{\beta}}u\|_{V^p(\mathbb{R}:H^s(\mathbb{T}^3))}.\end{equation} Moreover, define
\begin{equation}\label{defx}\|u\|_{X^s}=\bigg(\sum_{k\in\mathbb{Z}^3}\langle k\rangle^{2s}\big\|e^{iQ(k)t}\widehat{u}(t,k)\big\|_{U^2(\mathbb{R}_t;\mathbb{C})}^2\bigg)^{\frac{1}{2}}\end{equation} and \begin{equation}\label{defy}\|u\|_{Y^s}=\bigg(\sum_{k\in\mathbb{Z}^3}\langle k\rangle^{2s}\big\|e^{iQ(k)t}\widehat{u}(t,k)\big\|_{V^2(\mathbb{R}_t;\mathbb{C})}^2\bigg)^{\frac{1}{2}},\end{equation} where $\widehat{u}$ denotes the Fourier transform in space, and \[Q(k)=\beta_1k_1^2+\beta_2k_2^2+\beta_3k_3^2,\quad\text{for } k=(k_1,k_2,k_3).\] For any compact interval $I\subset\mathbb{R}$, define also the local-in-time spaces $X^s(I)$ and $Y^s(I)$ by\[\|u\|_{X^s(I)}=\inf_{v\equiv u\text{ on }I}\|v\|_{X^s},\quad \|u\|_{Y^s(I)}=\inf_{v\equiv u\text{ on }I}\|v\|_{Y^s}.\] When $I=[a,b]$ we also abbreviate $X^s[a,b]$ and $Y^s[a,b]$.
\end{df}In addition to the $X^p$ and $Y^p$ spaces, we will also use the traditional $X^{s,b}$ norms, namely
\begin{equation}\label{defxsb}\|u\|_{X^{s,b}}=\bigg(\sum_{k\in\mathbb{Z}^3}\int_{\mathbb{R}}\langle k\rangle^{2s}\langle \xi+Q(k)\rangle^{2b}|\mathcal{F}_{t,x}u(k,\xi)|^2\,\mathrm{d}\xi\bigg)^{1/2},\quad \|u\|_{X^{s,b}(I)}=\inf_{v\equiv u\text{ on }I}\|v\|_{X^{s,b}}.\end{equation}Finally, we will define a long-time Strichartz norm, which will play a crucial role in the proof of Theorem \ref{main}.
\begin{df}\label{longtimenorm} Let $7/2\leq q\leq 4$, $5\leq q'\leq 12$. For any $N\geq 1$ and any finite interval $J$, define \begin{equation}\label{longnorm}\|u\|_{S_{N,J}^{q,q'}}:=\bigg(\sum_{m\in\mathbb{Z}}\big(N^{\frac{5}{q'}-\frac{1}{2}}\|u\|_{L_{t,x}^{q'}([m,m+1]\cap J)}\big)^{q}\bigg)^{\frac{1}{q}},\end{equation} and  \begin{equation}\label{longnorm2}\|u\|_{S_{N,J}^{q}}=\bigg(\sum_{m\in\mathbb{Z}}\big(\sup_{5\leq q'\leq 12}N^{\frac{5}{q'}-\frac{1}{2}}\|u\|_{L_{t,x}^{q'}([m,m+1]\cap J)}\big)^{q}\bigg)^{\frac{1}{q}}.\end{equation}Notice that \[\|u\|_{S_{N,J}^q}\sim \max\big(\|u\|_{S_{N,J}^{q,6}},\|u\|_{S_{N,J}^{q,12}}\big)\] by H\"{o}lder, and that $\|u\|_{S_{N,J}^q}$ and $\|u\|_{S_{N,J}^{q,q'}}$ are both decreasing in $q$.\end{df}
\subsection{Linear estimates}\label{lin0} In this section we collect the standard linear Strichartz and embedding estimates as in \cite{HTT}, \cite{GOW}, \cite{Str}. We will also prove Proposition \ref{longstr}, which is a consequence of the long-time Strichartz estimates established in \cite{DGG}.
\begin{prop}[Strichartz estimates] \label{linear} Let $I$ be a time interval with length $|I|\lesssim 1$. We have the following estimates.
\begin{enumerate}
\item Homogeneous Strichartz estimates:
\begin{equation}\label{str1}\big\|e^{it\Delta}P_Nf\big\|_{L_{t,x}^q(I)}\lesssim N^{\frac{3}{2}-\frac{5}{q}}\|P_Nf\|_{L^2},\end{equation} for any fixed $q>10/3$. The same estimate holds for $P_{\mathcal{B}}f$ for any ball $\mathcal{B}$ of radius $N$, and an improved estimate\begin{equation}\label{str2}\big\|e^{it\Delta}\mathbb{P}_\mathcal{C}f\big\|_{L_{t,x}^q(I)}\lesssim N^{\frac{3}{2}-\frac{5}{q}}\bigg(\frac{\#\mathcal{C}}{N^3}\bigg)^{\frac{1}{2}-\frac{5}{3q}-\varepsilon}\|\mathbb{P}_{\mathcal{C}}f\|_{L^2}\end{equation} holds for any fixed $\varepsilon>0$ and any set $\mathcal{C}\subset \mathbb{Z}^3$ with diameter not exceeding $N$.
\item Inhomogeneous Strichartz estimates: let $t_0\in I$ and \begin{equation}\label{duham}\mathcal{I}G(t)=\int_{t_0}^t e^{i(t-t')\Delta}G(t')\,\mathrm{d}t'\end{equation} be the Duhamel operator, then for any $q_1>10/3$ and $1\leq q_2<10/7$, one has\begin{equation}\label{inhom}\|\mathcal{I}P_NG\|_{L_{t,x}^{q_1}(I)}\lesssim N^{\frac{5}{q_2}-\frac{5}{q_1}-2}\|P_NG\|_{L_{t,x}^{q_2}(I)}.\end{equation}
\end{enumerate}
\end{prop}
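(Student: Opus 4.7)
The proposition contains three related estimates --- the basic homogeneous Strichartz (\ref{str1}), the refined sharp-cutoff version (\ref{str2}), and the inhomogeneous bound (\ref{inhom}) --- all of which are classical, so the plan is mostly to orchestrate standard ingredients as in the cited references.

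For (\ref{str1}), I would start from the $\ell^2$-decoupling theorem of Bourgain--Demeter applied to the paraboloid $\{(\xi,Q(\xi)):\xi\in\mathbb{R}^3\}$. Since $Q$ is a positive definite quadratic form (hence its graph has nonvanishing Gauss curvature), decoupling applies with constants depending only on $\beta$, and no rationality hypothesis is required. Decoupling at the critical exponent gives $\|e^{it\Delta}P_Nf\|_{L^{10/3}_{t,x}(I)}\lesssim N^{\varepsilon}\|P_Nf\|_{L^2}$ on a unit interval $I$; interpolating with the trivial Bernstein bound $\|e^{it\Delta}P_Nf\|_{L^\infty_{t,x}}\lesssim N^{3/2}\|P_Nf\|_{L^2}$ absorbs the $N^\varepsilon$ loss and yields (\ref{str1}) for any fixed $q>10/3$. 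The proof goes through unchanged with $P_{\mathcal{B}}$ in place of $P_N$, since decoupling only sees the radius of the frequency support.

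For the refined estimate (\ref{str2}), I would interpolate between the decoupling bound $\|e^{it\Delta}\mathbb{P}_{\mathcal{C}}f\|_{L^{10/3}_{t,x}(I)}\lesssim N^{\varepsilon}\|\mathbb{P}_{\mathcal{C}}f\|_{L^2}$ (which sees only $N$ and not the density $\#\mathcal{C}/N^3$) and the crude bound $\|e^{it\Delta}\mathbb{P}_{\mathcal{C}}f\|_{L^\infty_{t,x}}\leq(\#\mathcal{C})^{1/2}\|\mathbb{P}_{\mathcal{C}}f\|_{L^2}$ obtained from Cauchy--Schwarz on the Fourier expansion. Taking the interpolation parameter $\theta=1-10/(3q)$ produces $(\#\mathcal{C})^{\theta/2}=(\#\mathcal{C})^{1/2-5/(3q)}$, which rewrites as $N^{3/2-5/q}(\#\mathcal{C}/N^3)^{1/2-5/(3q)}$; the residual $N^{O(\varepsilon)}$ loss from decoupling is absorbed into the $-\varepsilon$ in the exponent of (\ref{str2}) since $(\#\mathcal{C}/N^3)^{-\varepsilon}\geq N^{-3\varepsilon}$.

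For (\ref{inhom}), the plan is standard duality plus Christ--Kiselev. Let $Tf=e^{it\Delta}P_Nf$; by (\ref{str1}) applied at the exponents $q_1$ and at $q_2'>10/3$ (the latter allowed by $q_2<10/7$), $T$ maps $L^2_x\to L^{q_1}_{t,x}(I)$ with norm $\lesssim N^{3/2-5/q_1}$, and by duality $T^\ast$ maps $L^{q_2}_{t,x}(I)\to L^2_x$ with norm $\lesssim N^{3/2-5/q_2'}$. Composing, the untruncated Duhamel operator $G\mapsto\int_I e^{i(t-t')\Delta}P_NG(t')\,\mathrm{d}t'$ is bounded $L^{q_2}_{t,x}(I)\to L^{q_1}_{t,x}(I)$ with norm $\lesssim N^{3-5/q_1-5/q_2'}=N^{5/q_2-5/q_1-2}$. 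Passing to the truncated $\mathcal{I}$ in (\ref{duham}) is the Christ--Kiselev lemma, which applies since $q_1>10/3>10/7>q_2$. I do not expect a substantive obstacle, as each ingredient is explicit in the cited references; the mild point to verify is that all implicit constants depend only on $\lambda$ (through $\beta$) and $s$, which is immediate from the above.
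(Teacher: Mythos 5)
The inhomogeneous estimate (\ref{inhom}) is handled exactly as in the paper (Strichartz, dual Strichartz, Christ--Kiselev), and that part of your argument is fine. The problem is in your derivation of the homogeneous estimates.

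For (\ref{str1}), you propose to get the lossless estimate by interpolating the decoupling bound at $q=10/3$ (which carries an $N^{\varepsilon}$ loss) with the Bernstein bound at $q=\infty$. This does not remove the loss. If $\theta=1-10/(3q)$ is the interpolation parameter, the result is $\lesssim N^{(1-\theta)\varepsilon}\,N^{3/2-5/q}=N^{10\varepsilon/(3q)}\,N^{3/2-5/q}$, and for any \emph{fixed} $q>10/3$ the exponent $10\varepsilon/(3q)$ is a fixed positive multiple of $\varepsilon$; sending $\varepsilon\to 0$ blows up the decoupling constant $C_\varepsilon$, so you never reach the lossless bound $N^{3/2-5/q}$. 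Passing from the $\varepsilon$-lossy decoupling estimate to the lossless scale-invariant Strichartz estimate in the range $q>10/3$ is precisely the content of Killip--Visan \cite{KV}, and it requires a more involved argument (iterating decoupling across scales, not a single interpolation with $L^\infty$). The paper does not reprove this; it simply cites \cite{KV}, obtains the $P_{\le N}$ variant (\ref{lowf}) from there, and then gets the $P_{\mathcal{B}}$ version by Galilean symmetry.

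The same gap propagates to your proof of (\ref{str2}). Your residual loss is a positive power of $N$, say $N^{c\varepsilon}$, and you claim it is absorbed because $(\#\mathcal{C}/N^3)^{-\varepsilon}\ge N^{-3\varepsilon}$. That inequality is true but goes the wrong way: when $\#\mathcal{C}\sim N^3$ the factor $(\#\mathcal{C}/N^3)^{-\varepsilon}$ is $O(1)$, so it cannot absorb any positive power of $N$, and in that regime (\ref{str2}) degenerates to the lossless (\ref{str1}), which your argument has not established. The paper sidesteps this by interpolating the \emph{lossless} (\ref{str1}) at $q_0=10/3+\varepsilon/10$ (rather than the lossy decoupling bound at $q=10/3$) with the Cauchy--Schwarz bound at $q=\infty$; this produces $N^{3/2-5/q}(\#\mathcal{C}/N^3)^{1/2-5/(3q)-\varepsilon/(20q)}$ exactly, with no spurious $N$ power to absorb. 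To fix your proof you should invoke the lossless Killip--Visan estimate as a black box (as the paper does), and then run your interpolation from $q_0$ slightly above $10/3$ rather than from $q=10/3$.
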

\begin{proof} The scaling invariant Strichatrz estimate (\ref{str1}) is proved in \cite{KV}; moreover one actually has the estimate \begin{equation}\label{lowf}\|e^{it\Delta}P_{\leq N}f\|_{L_{t,x}^q(I)}\lesssim N^{\frac{3}{2}-\frac{5}{q}}\|P_{\leq N}f\|_{L^2}.\end{equation}The corresponding result for $P_{\mathcal{B}}$ follows from (\ref{lowf}) and Galilean symmetry. Now (\ref{str2}) is a consequence of (\ref{str1}), if one interpolates between $q=10/3+\varepsilon/10$ (which follows from (\ref{str1}) for $P_{\mathcal{B}}f$, since $\mathbb{P}_{\mathcal{C}}f=P_{\mathcal{B}}\mathbb{P}_{\mathcal{C}}f$ for some suitable $\mathcal{B}$) and $q=\infty$, and notices that \[\big\|e^{it\Delta}\mathbb{P}_\mathcal{C}f\big\|_{L_{t,x}^{\infty}(I)}\lesssim (\#\mathcal{C})^{1/2}\|\mathbb{P}_{\mathcal{C}}f\|_{L^2},\] which follows from Hausdorff-Young and H\"{o}lder. Finally, (\ref{inhom}) follows from Christ-Kiselev Lemma, the Strichartz estimate \begin{equation}\big\|e^{it\Delta}P_Nf\big\|_{L_{t,x}^{q_1}(I)}\lesssim N^{\frac{3}{2}-\frac{5}{q_1}}\|P_Nf\|_{L^2},\end{equation} and the dual Strichartz estimate\[\left\|\int_Ie^{-it'\Delta}P_NG(t')\,\mathrm{d}t'\right\|_{L^2}\lesssim N^{\frac{5}{q_2}-\frac{7}{2}}\|P_NG\|_{L_{t,x}^{q_2}(I)}.\]
\end{proof}
\begin{prop}[Properties of $X^s$ and $Y^s$ spaces] We have the following estimates.
\begin{enumerate}
\item Embeddings: we have 
\begin{equation}\label{embed1}U^p(\mathbb{R};H)\hookrightarrow V^p(\mathbb{R};H)\hookrightarrow U^q(\mathbb{R};H)\hookrightarrow L^{\infty}(\mathbb{R};H)\end{equation} for any $1\leq p<q<\infty$, and 
\begin{equation}\label{embed2}U_{\Delta}^2H^s\hookrightarrow X^s\hookrightarrow Y^s\hookrightarrow V_{\Delta}^2H^s.\end{equation} In paricular one has
\begin{equation}\label{embed3}\|u(t)\|_{H_x^s}\lesssim \|u\|_{X^s(I)}\end{equation} for any $t\in I$ if $u$ is weakly left continuous in $t$.
\item Linear and Strichartz estimates: for any finite interval $I$ we have
\begin{equation}\label{std1}\|e^{it\Delta }f\|_{X^s(I)}\lesssim \|f\|_{H^s}.\end{equation} Moreover, if $|I|\lesssim1 $ we have the Strichartz estimates \begin{equation}\label{std2}\|P_Nu\|_{L_{t,x}^q(I)}\lesssim N^{\frac{3}{2}-\frac{5}{q}}\|P_Nu\|_{U_{\Delta}^qL^2(I)}\lesssim N^{\frac{3}{2}-\frac{5}{q}}\|P_Nu\|_{X^0(I)},\end{equation} and the same for $P_{\mathcal{B}}u$ if $\mathcal{B}$ is any ball of radius $N$, for any fixed $q>10/3$.
\item Duality: for $s\geq 0$, $T>0$ and any $u$ we have \begin{equation}\label{duality}\|\mathcal{I}u\|_{X^s([t_0,t_0+T))}\leq\sup\bigg\{\bigg|\int_{[t_0,t_0+T)\times\mathbb{T}^3}u(t,x)\cdot\overline{v(t,x)}\,\mathrm{d}x\mathrm{d}t\bigg|:\|v\|_{Y^{-s}([t_0,t_0+T))}=1\bigg\},\end{equation} where the Duhamel operator $\mathcal{I}$ is defined in (\ref{duham}).
\end{enumerate}
\end{prop}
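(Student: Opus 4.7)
The plan is to treat the three parts in order, leaning on the standard $U^p$--$V^p$ machinery (Koch--Tataru, Hadac--Herr--Koch) together with the linear Strichartz bounds of Proposition \ref{linear}.

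For (1), I would first dispatch the scalar chain $U^p(\mathbb{R};H)\hookrightarrow V^p(\mathbb{R};H)\hookrightarrow U^q(\mathbb{R};H)\hookrightarrow L^\infty(\mathbb{R};H)$. The first embedding is routine on atoms via a triangle inequality applied to the consecutive $\phi_m$-differences. The third is immediate because each $U^q$ atom satisfies $\sup_t\|a(t)\|_H\leq \max_m\|\phi_m\|_H\leq (\sum\|\phi_m\|_H^q)^{1/q}=1$. The middle embedding $V^p\hookrightarrow U^q$ (for $p<q$) is the less trivial one, proved by approximating a $V^p$ function by piecewise-constant step functions on successively finer dyadic partitions and summing the telescoping contributions via $\ell^p\hookrightarrow\ell^q$. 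To lift these to (\ref{embed2}), I transfer the scalar statements to the Hilbert-valued setting: any $U^2(\mathbb{R};H^s)$ atom $v=\sum_m\mathbf{1}_{[t_{m-1},t_m)}\phi_{m-1}$ decomposes Fourier-wise into scalar $U^2$ atoms $\widehat{v}(\cdot,k)$ of norm $(\sum_m|\widehat{\phi}_{m-1}(k)|^2)^{1/2}$, which sum to $1$ in weighted $\ell^2_k$ by Plancherel; this yields $U_\Delta^2H^s\hookrightarrow X^s$ after absorbing the phase $e^{iQ(k)t}$. The middle inclusion $X^s\hookrightarrow Y^s$ is the scalar $U^2\hookrightarrow V^2$ applied mode-by-mode, and $Y^s\hookrightarrow V_\Delta^2H^s$ is the dual Minkowski bound. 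Finally, (\ref{embed3}) follows by combining the $L^\infty$-in-time tail of the $U^q$ embedding with a right-continuous $X^s$-extension of $u$ and taking infimum.

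For (2), the estimate (\ref{std1}) is essentially tautological: $t\mapsto e^{it\Delta}f$ satisfies $e^{-it\Delta}(e^{it\Delta}f)\equiv f$, so it is a single-atom element of $U_\Delta^2H^s$ of norm $\|f\|_{H^s}$, and the $U_\Delta^2H^s\hookrightarrow X^s$ embedding from (1) closes the argument. For the Strichartz bound (\ref{std2}) I apply (\ref{str1}) atomwise: a $U_\Delta^qL^2$ atom is $u=\sum_m\mathbf{1}_{[t_{m-1},t_m)}e^{it\Delta}\phi_{m-1}$, and since $|I|\lesssim 1$,
\[
\|P_Nu\|_{L_{t,x}^q(I)}^q\leq\sum_m\|e^{it\Delta}P_N\phi_{m-1}\|_{L_{t,x}^q(I)}^q\lesssim N^{(3/2-5/q)q}\sum_m\|P_N\phi_{m-1}\|_{L^2}^q\leq N^{(3/2-5/q)q},
\]
using $\sum_m\|\phi_{m-1}\|_{L^2}^q=1$. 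This gives the first inequality of (\ref{std2}); the second is the embedding $X^0\hookrightarrow Y^0\hookrightarrow V_\Delta^2L^2\hookrightarrow U_\Delta^qL^2$ obtained from (1). The $P_\mathcal{B}$ version is identical, invoking the $P_\mathcal{B}$ analogue of (\ref{str1}).

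For (3), the duality formula comes from the scalar $U^2$--$V^2$ duality. Setting $w(t)=e^{-it\Delta}\mathcal{I}u(t)$, one checks $w(t_0)=0$ and $w'(t)=e^{-it\Delta}u(t)$, so the Koch--Tataru duality on each Fourier mode, weighted by $\langle k\rangle^{2s}$ and summed via Cauchy--Schwarz, yields
\[
\|\mathcal{I}u\|_{X^s([t_0,t_0+T))}\leq\sup_{\|\varphi\|_{V^2H^{-s}}=1}\bigg|\int_{t_0}^{t_0+T}\langle e^{-it\Delta}u(t),\varphi(t)\rangle_{L^2}\,dt\bigg|.
\]
Setting $v(t,x)=e^{it\Delta}\varphi(t,x)$ converts the pairing into $\int u\overline{v}\,dx\,dt$ with $\|v\|_{Y^{-s}}=\|\varphi\|_{V^2H^{-s}}$ by definition, finishing (\ref{duality}). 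I expect the main obstacle to be the bookkeeping in part (1): the classical $V^p\hookrightarrow U^q$ embedding and the Fourier-to-space transfer between $X^s,Y^s$ and $U_\Delta^2H^s,V_\Delta^2H^s$ are conceptually clean but require several atom-level manipulations; everything else is a direct consequence of Proposition \ref{linear}, atom decomposition, and scalar duality.
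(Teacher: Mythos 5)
Your approach tracks the paper's, which delegates almost all of this proposition to \cite{HTT} (Remarks 1--2 and Propositions 2.3, 2.8 for (\ref{embed1})--(\ref{embed2}), Corollary 3.2 for (\ref{std2}), Proposition 2.11 for (\ref{duality})) and only writes out (\ref{embed3}) and (\ref{std1}) directly. Your arguments for parts (1) and (2) are correct, modulo two small points: in (\ref{embed3}) the hypothesis doing the work is weak \emph{left} continuity of $u$, not right-continuity of the extension (the latter is built into the $V^2$ convention and is automatic); and in (\ref{std1}) you should truncate to $\mathbf{1}_{[a,b)}(t)e^{it\Delta}f$ on some finite interval $[a,b)\supset I$ before calling it a $U_\Delta^2H^s$ atom, since the untruncated free flow is constant in $t$ under $e^{-it\Delta}$ and therefore not a $U^2$ atom at all.

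Part (3) has a genuine gap. You assert that for $v=e^{it\Delta}\varphi$ one has $\|v\|_{Y^{-s}}=\|\varphi\|_{V^2H^{-s}}$ ``by definition.'' That identity is false. The $Y^{-s}$ norm (see (\ref{defy})) is the weighted $\ell^2_k$ sum of the scalar $V^2_t$ norms of $e^{iQ(k)t}\widehat{v}(t,k)$, whereas $\|\varphi\|_{V^2H^{-s}}$ takes a single $V^2_t$ norm of the $H^{-s}$-valued function; the embedding (\ref{embed2}) that you yourself established in part (1) gives only the one-sided inclusion $Y^{-s}\hookrightarrow V_\Delta^2H^{-s}$, and the reverse inequality does not hold. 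Consequently the supremum over $\{\|\varphi\|_{V^2H^{-s}}=1\}$ dominates, but does not equal, the supremum over $\{\|v\|_{Y^{-s}}=1\}$, so your chain of inequalities proves a strictly weaker statement than (\ref{duality}). The repair is to assemble the dual test function directly in $Y^{-s}$, mode by mode: for each $k$ choose $v_k$ with $\|v_k\|_{V^2}=1$ nearly realizing $\|e^{iQ(k)t}\widehat{\mathcal{I}u}(\cdot,k)\|_{U^2}$ through the scalar Koch--Tataru pairing, set
\[\widehat{v}(t,k)=\langle k\rangle^{2s}\,\big\|e^{iQ(k)t}\widehat{\mathcal{I}u}(\cdot,k)\big\|_{U^2}\,e^{-iQ(k)t}\,v_k(t),\]
and verify directly that $\|v\|_{Y^{-s}}=\|\mathcal{I}u\|_{X^s}$ while $\int u\overline{v}\,\mathrm{d}x\,\mathrm{d}t=\|\mathcal{I}u\|_{X^s}^2$; normalizing $v$ then yields (\ref{duality}). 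This is precisely the argument of \cite{HTT}, Proposition 2.11, which the paper cites.
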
\begin{proof}
For the proof of (\ref{embed1}) and (\ref{embed2}) see \cite{HTT}, Remarks $1$ and $2$, Propositions 2.3 and 2.8. As a consequence we have (\ref{embed3}), since by choosing an extension we may assume $u\in X^s$, thus \[\|u\|_{L_t^{\infty}H_x^s}\lesssim  \|u\|_{V_\Delta^2H^s}\lesssim \|u\|_{X^s},\] and (\ref{embed3}) follows using left weak-continuity. For (\ref{std1}) we may assume $I=[a,b)$ by enlarging $I$, and thus $v(t)=\mathbf{1}_{[a,b)}(t)e^{it\Delta}f$ satisfies\[\widehat{v}(t,k)=\mathbf{1}_{[a,b)}(t)\widehat{f}(k),\quad \|\widehat{v}(\cdot,k)\|_{U^2(\mathbb{R};\mathbb{C})}\lesssim|\widehat{f}(k)|,\] so (\ref{std1}) follows from the definition of $X^s$ norm. Next, (\ref{std2}) follows from (\ref{str1}) by repeating the proof of \cite{HTT}, Corollary 3.2, and using the embedding \[X^0\hookrightarrow V_{\Delta}^2L^2\hookrightarrow U_\Delta^qL^2.\] Finally (\ref{duality}) is proved (up to a time translation) in \cite{HTT}, Proposition 2.11, and the same proof applies here with trivial modifications.
\end{proof}
\begin{prop}[Properties of $X^{s,b}$ spaces] We have the following estimates.
\begin{enumerate}
\item For any $s$ and $b$, and any fixed smooth cutoff function $\chi$, we have\begin{equation}\label{xsb01}\|\chi(t)u\|_{X^{s,b}}\lesssim\|u\|_{X^{s,b}};\end{equation} moreover, let \[\mathcal{I}'G(t)=\chi(t-m)\int_m^te^{-i(t-t')}G(t')\,\mathrm{d}t'\] be the smoothly truncated Duhamel operator, then\begin{equation}\label{xsb02}\|\mathcal{I}'G\|_{X^{s,b}}\lesssim\|G\|_{X^{s,b-1}}\end{equation} for any $s$ and any $b\in (1/2,1)$.

\item The Strichartz estimate\begin{equation}\label{strxsb}\|P_Nu\|_{L_{t,x}^q(I)}\lesssim N^{\frac{3}{2}-\frac{5}{q}}\|u\|_{X^{0,b}}\end{equation} holds for any fixed $b>1/2$ and $q>10/3$, where $I$ is any interval of length $|I|\lesssim 1$. The same estimate is true for $P_{\mathcal{B}}u$ if $\mathcal{B}$ is any ball of radius $N$, as well as the improvement for $\mathbb{P}_{\mathcal{C}}u$ as in (\ref{str2}) when $\mathcal{C}$ has diameter not exceeding $N$.
\item For any fixed $\varepsilon>0$ and any fixed smooth cutoff $\chi$, we have
\begin{equation}\label{xsb1}\|\chi(t)u\|_{X^{s,1/4-\varepsilon}}\lesssim \|u\|_{X^s}.\end{equation}
\end{enumerate}
\end{prop}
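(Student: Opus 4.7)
The three estimates should be handled by standard Fourier-analytic arguments, with the last one requiring the most care.

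For (\ref{xsb01}), pass to the Fourier side in $(t,x)$: multiplication by $\chi(t)$ becomes convolution in the modulation variable $\xi$ against the Schwartz function $\widehat{\chi}$. Combined with Peetre's inequality $\langle\xi+Q(k)\rangle^{b}\lesssim\langle\xi-\xi'\rangle^{|b|}\langle\xi'+Q(k)\rangle^{b}$ and Young's inequality in $\xi$ for each fixed $k$, this yields the claim after summing in $k$. The truncated Duhamel estimate (\ref{xsb02}) is the classical $X^{s,b}$ Duhamel bound: writing $\mathcal{I}'G$ on the Fourier side produces a factor $\langle\xi+Q(k)\rangle^{-1}$ from the time integration, which promotes modulation weight $b-1$ to $b$, and the cutoff by $\chi(t-m)$ is absorbed via (\ref{xsb01}); the restrictions $1/2<b<1$ are precisely what is needed for the relevant $\xi$-integrals to converge.

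Estimate (\ref{strxsb}) follows by the transference principle: writing $u(t,x)=\int e^{it\xi}\,e^{it\Delta}g_{\xi}(x)\,d\xi$ via the partial Fourier transform in time, one applies the free Strichartz estimate (\ref{str1}) to each profile $g_{\xi}$, then combines Minkowski's inequality in $L_{t,x}^{q}$ with Cauchy--Schwarz in $\xi$ using $\int\langle\xi\rangle^{-2b}\,d\xi<\infty$ (valid since $b>1/2$). The analogues for $P_{\mathcal{B}}$ and $\mathbb{P}_{\mathcal{C}}$ go through verbatim, with (\ref{str1}) replaced by (\ref{str2}) for the latter.

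The heart of the proposition is (\ref{xsb1}). Since both sides decouple into weighted $\ell^{2}_{k}$-sums over spatial frequencies, and since the prefactor $e^{iQ(k)t}$ in the definition of $X^{s}$ corresponds exactly to the $\xi$-shift that turns $\langle\xi+Q(k)\rangle$ into $\langle\xi\rangle$ on the Fourier side, the inequality reduces to the scalar one-dimensional claim
$$\|\chi(t)g(t)\|_{H_{t}^{1/4-\varepsilon}(\mathbb{R})}\lesssim\|g\|_{U^{2}(\mathbb{R};\mathbb{C})}.$$
By the atomic definition of $U^{2}$, it suffices to prove this for a single atom $a=\sum_{m=1}^{M}\phi_{m-1}\mathbf{1}_{[t_{m-1},t_{m})}$ with $\sum_{m}|\phi_{m-1}|^{2}\leq 1$. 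The naive triangle inequality applied to $\chi a=\sum_{m}\phi_{m-1}\chi\mathbf{1}_{[t_{m-1},t_{m})}$ loses a factor of $M^{1/2}$, which is unacceptable since $M$ may be arbitrarily large. To avoid the loss, I would first integrate by parts once in the Fourier integral, which gives
$$\widehat{\chi\cdot\mathbf{1}_{[t_{m-1},t_{m})}}(\xi)=(i\xi)^{-1}\bigl(\chi(t_{m-1})e^{-it_{m-1}\xi}-\chi(t_{m})e^{-it_{m}\xi}\bigr)+O(\langle\xi\rangle^{-N}),$$
and then apply Abel summation to rewrite the leading part of $\widehat{\chi a}(\xi)$ as $(i\xi)^{-1}\sum_{m}(\phi_{m}-\phi_{m-1})\chi(t_{m})e^{-it_{m}\xi}$, with the new coefficients $\{\phi_{m}-\phi_{m-1}\}$ still lying in $\ell^{2}$ with norm at most $2$. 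One then estimates the resulting exponential sum in the weighted $L^{2}_{\xi}$-norm against $\langle\xi\rangle^{2(1/4-\varepsilon)-2}$ by a Schur-type argument: the margin $1/4-\varepsilon<1/2$ makes the weight integrable at infinity and lets the oscillation in $\xi$ across distinct partition points provide the cross-kernel decay needed to close the test with an $M$-independent constant. The bookkeeping of these off-diagonal interactions, which must be carried out without any logarithmic or polynomial loss in the partition parameters, is where the main difficulty lies.
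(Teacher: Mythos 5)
Your treatment of parts (1) and (2) matches the standard arguments the paper implicitly relies on: convolution against $\widehat\chi$ with Peetre's inequality for \eqref{xsb01}, the usual $X^{s,b}$ Duhamel computation for \eqref{xsb02} (the paper also notes that the translation $t\mapsto t-m$ adds nothing), and the $X^{s,b}$ transference principle for \eqref{strxsb}, which is exactly how the paper derives it from Proposition~\ref{linear}. Your reduction of \eqref{xsb1} to the scalar estimate $\|\chi g\|_{H^{1/4-\varepsilon}}\lesssim\|g\|_{U^2}$, and then to a single $U^2$ atom, is also the same as the paper's, and you correctly identify that the triangle inequality alone loses $M^{1/2}$.

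The remainder of your argument for \eqref{xsb1}, however, does not close, and in fact the proposed route has two concrete problems. First, the remainder produced by one integration by parts is not $O(\langle\xi\rangle^{-N})$: it is $(i\xi)^{-1}\int_{t_{m-1}}^{t_m}\chi'(t)e^{-it\xi}\,dt$, whose sum over $m$ is $(i\xi)^{-1}\widehat{\chi' g}(\xi)$, i.e.\ a factor $|\xi|^{-1}$ times an $L^2$ function of the same kind. (This particular discrepancy is actually harmless, since $|\xi|^{-1}\cdot L^2$ already beats the weight $\langle\xi\rangle^{1/4-\varepsilon}$, but the claimed rapid decay is false.) Second, and more seriously, the Schur/off-diagonal step you invoke cannot be carried out as stated: after Abel summation you are left with bounding $\int\langle\xi\rangle^{-3/2-2\varepsilon}\big|\sum_m c_m e^{-it_m\xi}\big|^2\,d\xi$ for an arbitrary $\ell^2$ sequence $\{c_m\}$ and arbitrary partition points $t_m$ in a fixed compact interval, and the relevant kernel $G(t_n-t_m)$ is merely bounded near the diagonal. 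Since a $U^2$ atom may have arbitrarily many partition points clustered in an arbitrarily small interval, $\sum_n |G(t_n-t_m)|$ is not uniformly bounded, so the Schur test loses a factor depending on $M$. You acknowledge that this ``bookkeeping of off-diagonal interactions'' is the main difficulty, but you do not supply the missing idea that would control it; as written, the proof is incomplete at precisely the point the estimate is nontrivial.

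The paper sidesteps the off-diagonal issue entirely with a dichotomy argument: after reducing to $\int_{|\xi|\sim K}|\widehat{\chi g}(\xi)|^2\,d\xi\lesssim K^{-1/2+\varepsilon}$ for each dyadic $K$, it splits the atom $g=g_1+g_2$ according to whether $|I_m|\leq K^{-1/2+\varepsilon}$ or $|I_m|\geq K^{-1/2+\varepsilon}$. The short-interval piece $g_1$ is handled by a plain $L^2$ bound (each $\int_{I_m}\chi^2$ is small), while the long-interval piece $g_2$ consists of at most $O(K^{1/2-\varepsilon})$ intervals in a fixed compact set, so the $\ell^1$--$\ell^2$ loss in the triangle inequality is only $K^{1/4-\varepsilon/2}$, which is exactly absorbed by the uniform bound $\|\chi\mathbf{1}_I\|_{H^{1/2-\varepsilon}}\lesssim1$. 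This dichotomy, tuned to the dyadic frequency $K$, is the missing ingredient in your approach; you would need either to find an analogue of it within the Abel-summation framework or to adopt the paper's split.
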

\begin{proof} (\ref{xsb01}) and (\ref{xsb02}) are standard (the standard proof for (\ref{xsb02}) treats the case $m=0$, but the general case is no different); see for example \cite{Tao}, Section 2.6. Next, (\ref{strxsb}) and estimates for $P_{\mathcal{B}}u$ and $\mathbb{P}_{\mathcal{C}}u$ follows from the corresponding estimates in Proposition \ref{linear} and the following transference principle, which follows from a standard representation formula for $X^{s,b}$ functions (see \cite{KS})
\[\text{If $\|e^{it\Delta}Pf\|_{L_t^qL_x^r(I)}\lesssim \|Pf\|_{X^s}$, then $\|Pu\|_{L_t^qL_x^r(I)}\lesssim\|u\|_{X^{s,b}}$ when $b>1/2$,}\] where $P$ is a certain projection (say $P_N$, $P_{\mathcal{B}}$ or $\mathbb{P}_{\mathcal{C}}$). Now let us prove (\ref{xsb1}); we may assume $s=0$. By separating modes $e^{ik\cdot x}$, we only need to prove \[\|\chi(t)g(t)\|_{H_t^{1/4-\varepsilon}}\lesssim\|g(t)\|_{U^2}\] for any function $g(t)$. In fact, (\ref{xsb1}) follows from applying this to $e^{iQ(k)t}\widehat{u}(k,t)$ for each $k\in\mathbb{Z}^3$, and taking $\ell^2$ norm in $k$, due to the structure of $X^s$ and $X^{s,b}$ norms. By definition of $U^2$, we may assume $g$ is a $U^2$ atom, which is a step function\[g=\sum_{m=1}^M\alpha_m\mathbf{1}_{I_m},\] where $I_m$ are pairwise disjoint intervals and \[\sum_{m=1}^M|\alpha_m|^2=1.\] Since \[\|\chi(t)g\|_{L^2}^2=\sum_{m}|\alpha_m|^2\int_{I_m}\chi^2(t)\,\mathrm{d}t\lesssim1,\] it suffices to prove \begin{equation}\label{fdec}\int_{|\xi|\sim K}|\widehat{\chi(t)g}(\xi)|^2\,\mathrm{d}\xi\lesssim K^{-1/2+\varepsilon}\end{equation} for each $K\geq 1$. Now we decompose $g$ into two parts\[g_1=\sum_{|I_m|\leq K^{-1/2+\varepsilon}}\alpha_m\mathbf{1}_{I_m},\quad g_2=\sum_{|I_m|\geq K^{-1/2+\varepsilon}}\alpha_m\mathbf{1}_{I_m}.\] Then we have\[\|\chi(t)g_1\|_{L^2}^2=\sum_{|I_m|\leq K^{-1/2+\varepsilon}}|\alpha_m|^2\int_{I_m}\chi^2(t)\lesssim K^{-1/2+\varepsilon}.\] When considering $\chi(t)g_2$ we may assume that all these $I_m$ are contained in a fixed interval (the support of $\chi$), so there are at most $O(K^{1/2-\varepsilon})$ intervals in the sum. Therefore\[\|\chi(t)g_2\|_{H^{1/2-\varepsilon}}\lesssim\sum_{|I_m|\geq K^{-1/2+\varepsilon}}|\alpha_m|\cdot\|\chi(t)\mathbf{1}_{I_m}\|_{H^{1/2-\varepsilon}}\lesssim K^{1/4-\varepsilon/2}\sup_{m}\|\chi(t)\mathbf{1}_{I_m}\|_{H^{1/2-\varepsilon}}\] by H\"{o}lder. Since $\|\chi(t)\mathbf{1}_{I}\|_{H^{1/2-\varepsilon}}\lesssim 1$ uniformly for any interval $I$, which follows from the combination of $L^2$ estimate and pointwise decay estimate of the Fourier transform, we get that\[\int_{|\xi|\sim K}|\widehat{\chi(t)g_2}(\xi)|^2\,\mathrm{d}\xi\lesssim K^{-1/2+\varepsilon}\] also. This proves (\ref{fdec}) and thus (\ref{xsb1}).
\end{proof}
\begin{prop}[A long-time Strichartz estimate]\label{longstr} There exists a set $W'\subset(\mathbb{R}^+)^3$ with Lebesgue measure zero, such that for each $\beta=(\beta_1,\beta_2,\beta_3)\in (\mathbb{R}^+)^3-W'$, one has
\begin{equation}\label{strlong}\sup_{|J|=N^\nu}\|e^{it\Delta}P_Nf\|_{S_{N,J}^{q,q'}}\lesssim N\|P_Nf\|_{L^2}.\end{equation} uniformly for $7/2\leq q\leq 4$, $5\leq q'\leq 12$, $0\leq \nu\leq 1/10$, and any $N$. In particular one has \begin{equation}\label{strlong2}\sup_{|J|=N^\nu}\|e^{it\Delta}P_Nf\|_{S_{N,J}^{q}}\lesssim N\|P_Nf\|_{L^2}\end{equation} uniformly in $q$, $\nu$ and $N$.
\end{prop}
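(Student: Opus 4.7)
The plan is to derive (\ref{strlong}) as essentially the $S^{q,q'}_{N,J}$-packaging of the main long-time Strichartz theorem from \cite{DGG}. I would first extract from that paper the null set $W'\subset(\mathbb{R}^+)^3$ and, for each $\beta\notin W'$, parameters $\nu_0>1/10$ and $q_0>10/3$ such that
\[
\|e^{it\Delta}P_N f\|_{L^{q_0}_{t,x}(J)}\lesssim N^{3/2-5/q_0}\|P_N f\|_{L^2}
\]
holds uniformly in every dyadic $N$ and every interval $J$ with $|J|\le N^{\nu_0}$. Since the $S^{q,q'}_{N,J}$ norm is built from $L^{q'}_{t,x}$ norms on unit sub-intervals with the normalization $N^{5/q'-1/2}$, the Strichartz scaling $N^{3/2-5/q'}$ is exactly what converts the target right-hand side into $N\|P_N f\|_{L^2}$.

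Second, to cover the full parameter range $7/2\le q\le 4$, $5\le q'\le 12$, I would interpolate. H\"{o}lder between the long-time $L^{q_0}$ bound above and the trivial $\|e^{it\Delta}P_N f\|_{L^\infty_{t,x}}\lesssim N^{3/2}\|P_N f\|_{L^2}$ coming from Bernstein yields the analogous long-time $L^{q'}_{t,x}(J)$ estimate at scaling $N^{3/2-5/q'}$ for every $q'\in[5,12]$. Since $\|\cdot\|_{\ell^{q'}_m L^{q'}_{t,x}([m,m+1]\cap J)}=\|\cdot\|_{L^{q'}_{t,x}(J)}$, this immediately gives (\ref{strlong}) on the diagonal $q=q'$. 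Off-diagonal, for $q<q'$, a direct H\"{o}lder in $m$ bounds $\ell^q_m$ by $|J|^{1/q-1/q'}\ell^{q'}_m$, introducing a factor $N^{\nu(1/q-1/q')}$; to absorb this one needs either a sharper form of the \cite{DGG} estimate carrying a matching $N$-decay at the interior exponent $q_0$, or---which is cleaner and likely how \cite{DGG} frames things---an $\ell^q_m L^{q'}_{t,x}$-type long-time Strichartz bound supplied directly for the required $(q,q')$. Finally (\ref{strlong2}) follows from (\ref{strlong}) via the equivalence $\|u\|_{S^q_{N,J}}\sim\max(\|u\|_{S^{q,6}_{N,J}},\|u\|_{S^{q,12}_{N,J}})$ recorded right after Definition~\ref{longtimenorm}.

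The only substantive obstacle is the deep number-theoretic input from \cite{DGG}, whose proof exploits the generic irrationality of $\beta$; the present proof is bookkeeping only. The main subtlety to watch is precisely the $\ell^q_m L^{q'}_{t,x}$ repackaging step: one must ensure that the strength of the form actually provided by \cite{DGG}, combined with H\"{o}lder and the spacetime interpolation above, yields the $N$-power $1$ on the right without parasitic loss, uniformly over the parameter ranges $(q,q',\nu)\in[7/2,4]\times[5,12]\times[0,1/10]$.
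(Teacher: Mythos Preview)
Your proposal has a genuine gap at the off-diagonal step, and this is precisely the step that matters: note that in the stated parameter range $q\le 4<5\le q'$, so one \emph{always} has $q<q'$, and your ``diagonal'' argument via $\ell^{q'}_m L^{q'}_{t,x}=L^{q'}_{t,x}(J)$ is vacuous. Your interpolation of the \cite{DGG} bound with the Bernstein $L^\infty$ bound does produce $\|e^{it\Delta}P_Nf\|_{L^{q'}_{t,x}(J)}\lesssim N^{3/2-5/q'}\|P_Nf\|_{L^2}$, but this only controls $\ell^{q'}_m L^{q'}_{t,x}$, and passing to $\ell^q_m$ with $q<q'$ via H\"older necessarily costs $N^{\nu(1/q-1/q')}$. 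Hoping that \cite{DGG} supplies an $\ell^q_m L^{q'}_{t,x}$ bound directly is not a proof; the paper does not invoke any such statement.

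The paper's device is different and fills exactly this gap. One applies the \cite{DGG} estimate at the \emph{outer} exponent $q$, obtaining $\|e^{it\Delta}P_Nf\|_{L^q_{t,x}(J')}\lesssim N^{3/2-5/q}\|P_Nf\|_{L^2}$ for $|J'|\le 2N^\nu$ (the condition $\nu\le 1/10$ ensures $\nu<\tfrac{4(3q-10)}{3q+14}$ for $q\ge 7/2$). Then on each unit interval $I=[m,m+1]\subset J$ one upgrades $L^q_{t,x}\to L^{q'}_{t,x}$ locally: Gagliardo--Nirenberg in time gives $\|g\|_{L^{q'}_t(I)}\lesssim\|g\|_{L^q_t(2I)}^{1-\alpha}\|\partial_t g\|_{L^q_t(2I)}^{\alpha}+\|g\|_{L^q_t(2I)}$ with $\alpha=\tfrac1q-\tfrac1{q'}$, and for $g(t)=\|e^{it\Delta}P_Nf\|_{L^{q'}_x}$ the frequency localization gives $|\partial_t g|\lesssim N^2 g$. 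Combined with Bernstein in space this yields $\|e^{it\Delta}P_Nf\|_{L^{q'}_{t,x}(I)}\lesssim N^{5\alpha}\|e^{it\Delta}P_Nf\|_{L^q_{t,x}(2I)}$. Taking the $\ell^q_m$ sum of the right-hand side reconstitutes $\|e^{it\Delta}P_Nf\|_{L^q_{t,x}}$ on (a slight enlargement of) $J$, and the \cite{DGG} bound closes at the correct power of $N$. The point you are missing is this Gagliardo--Nirenberg/Bernstein transfer on unit intervals, which exploits $\partial_t e^{it\Delta}P_N=i\Delta e^{it\Delta}P_N\sim N^2$ and is what allows the outer $\ell^q_m$ to come directly from the $L^q_t$ integrability of \cite{DGG} without any lossy H\"older in $m$.
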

\begin{proof} Since \[\nu\leq \frac{1}{10}<\frac{4}{37}\leq \frac{4(3q-10)}{3q+14},\]by the long-time Strichartz estimates proved in \cite{DGG}, we can find a set $W'\subset(\mathbb{R}^+)^3$ with measure zero such that for each $\beta\in(\mathbb{R}^+)^3-W'$, we have \begin{equation}\label{linearresult}\sup_{|J'|\leq 2N^\nu}\|e^{it\Delta}P_Nf\|_{L_{t,x}^{q}(J')}\lesssim N^{\frac{3}{2}-\frac{5}{q}}\|P_Nf\|_{L^2}\end{equation}uniformly in $q$ and $N$. Now Fix $q$, $q'$ and $\nu$.

Given $|J|=N^\nu$, among all $m$ such that $[m,m+1]\cap J\neq\emptyset$, there are at most two such that $[m,m+1]\not\subset J$. For these $m$ one directly uses (\ref{str1}) to bound \[N^{\frac{5}{q'}-\frac{1}{2}}\|e^{it\Delta}P_Nf\|_{L_{t,x}^{q'}([m,m+1]\cap J)}\lesssim N\|P_Nf\|_{L^2}.\] For any other $m$ we have $[m,m+1]\subset J$, let $I=[m,m+1]$ and $\chi$ be a cutoff function such that $\chi\equiv1$ on $I$ and $\chi\equiv0$ outside $2I$. By Gagliardo-Nirenberg inequality, for any function $g(t)$ one has\begin{equation}\label{gag}\|g\|_{L^{q'}(I)}\lesssim \|\chi(t)g\|_{L^{q'}(\mathbb{R})}\lesssim \|\chi(t)g\|_{L^{q}(\mathbb{R})}^{1-\alpha}\|\partial_t(\chi(t)g)\|_{L^{q}(\mathbb{R})}^{\alpha}\lesssim \|g\|_{L^{q}(2I)}^{1-\alpha}\|\partial_tg\|_{L^{q}(2I)}^{\alpha}+\|g\|_{L^{q}(2I)},\end{equation} where $\alpha=\frac{1}{q}-\frac{1}{q'}$. By choosing $g(t)=\|e^{it\Delta}P_Nf\|_{L_x^{q'}}$ and noticing that \[|\partial_tg(t)|\lesssim \|\partial_te^{it\Delta}P_Nf\|_{L_x^{q'}}\lesssim N^2|g(t)|,\] we get that\[\|e^{it\Delta}P_Nf\|_{L_{t,x}^{q'}(I)}\lesssim N^{2\alpha}\|e^{it\Delta}P_Nf\|_{L_{t}^{q}L_x^{q'}(2I)}\lesssim N^{5\alpha}\|e^{it\Delta}P_Nf\|_{L_{t,x}^{q}(2I)}\] by using (\ref{gag}) and Bernstein. Therefore, if $J=[b,b+N^\nu]$ we get \begin{multline}\|e^{it\Delta}P_Nf\|_{S_{N,J}^{q,q'}}^{q}\lesssim N^q\|P_Nf\|_{L^2}^q+\sum_{b\leq m\leq b+N^\nu-1}\big(N^{\frac{5}{q}-\frac{1}{2}}\|e^{it\Delta}P_Nf\|_{L_{t,x}^{q}[m-1,m+2]}\big)^{q}\\\lesssim N^q\|P_Nf\|_{L^2}^q+N^{5-\frac{q}{2}}\|e^{it\Delta}P_Nf\|_{L_{t,x}^{q}[b-1,b+N^\nu+1]}^{q},\end{multline} which then implies (\ref{strlong}) in view of (\ref{linearresult}).\end{proof}
\subsection{Nonlinear estimates}\label{non0} Here we review the multilinear estimates for $X^s$ spaces, which are established successively in \cite{HTT}, \cite{GOW}, \cite{Str}.
\begin{prop}[An improved trilinear Strichartz estimate]\label{trilinear} Let $I$ be an interval with $|I|\lesssim 1$. Then for $N_1\geq N_2\geq N_3\geq 1$, we have\begin{equation}\label{tri}\bigg\|\prod_{j=1}^3e^{it\Delta}P_{N_j}f_j\bigg\|_{L_{t,x}^2(I)}\lesssim\bigg(\frac{N_3}{N_1}+\frac{1}{N_2}\bigg)^{\frac{1}{16}}N_2N_3\prod_{j=1}^3\|P_{N_j}f_j\|_{L^2}.\end{equation} Moreover, we have the corresponding estimates in $Y^0$ norms, namely that\begin{equation}\label{tri2}\bigg\|\prod_{j=1}^3P_{N_j}u_j\bigg\|_{L_{t,x}^2(I)}\lesssim \bigg(\frac{N_3}{N_1}+\frac{1}{N_2}\bigg)^{\frac{1}{16}}N_2N_3\prod_{j=1}^3\|P_{N_j}u_j\|_{Y^0(I)}.\end{equation}
\end{prop}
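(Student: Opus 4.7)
My plan is to follow the strategy of \cite{HTT} with the refinements in \cite{GOW, Str}. I would first prove the free-solution estimate (\ref{tri}), then deduce (\ref{tri2}) by the standard atomic transference in the $U^p/V^p$ framework: each $u_j\in Y^0(I)$ decomposes (after extension) into $U^2$-atoms via $Y^0\hookrightarrow V_\Delta^2 L^2$, the trilinear bound (\ref{tri}) extends multilinearly to such atoms by orthogonality on disjoint time intervals, and the passage from $U^2$ to $Y^0$ is handled by the multilinear interpolation of \cite{HTT}, with any $\varepsilon$-loss absorbed into the exponent $1/16$.

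For (\ref{tri}), set $u_j=e^{it\Delta}P_{N_j}f_j$ and partition the frequency annulus supporting $\widehat{P_{N_1}f_1}$ into balls $\mathcal{B}$ of radius $N_2$ centered on $(N_2\mathbb{Z})^3$, so $P_{N_1}f_1=\sum_{\mathcal{B}}P_{\mathcal{B}}f_1$. For each $\mathcal{B}$ the Fourier support of $e^{it\Delta}P_{\mathcal{B}}f_1\cdot u_2$ lies in a translate of a fixed region determined by the center of $\mathcal{B}$, giving almost-orthogonality upon summation in $\mathcal{B}$. The key ingredient is a bilinear $L^2_{t,x}$ estimate of the form
\[
\|e^{it\Delta}P_{\mathcal{B}}f_1\cdot e^{it\Delta}P_{N_2}f_2\|_{L^2_{t,x}(I)}\lesssim N_2\bigl(N_2/N_1+N_2^{-1}\bigr)^{1/16+\varepsilon}\|P_{\mathcal{B}}f_1\|_{L^2}\|P_{N_2}f_2\|_{L^2},
\]
proved on irrational tori by counting lattice points on sphere--hyperboloid intersections, which is the technical heart of \cite{HTT} (as refined in \cite{GOW, Str}). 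Combining this with the Bernstein bound $\|u_3\|_{L^\infty_{t,x}}\lesssim N_3^{3/2}\|P_{N_3}f_3\|_{L^2}$ via H\"older, and summing orthogonally in $\mathcal{B}$, produces (\ref{tri}) once one checks that the two gain regimes match: $N_3/N_1$ is the dominant term when $N_3\gtrsim N_2^2/N_1$, and $N_2^{-1}$ otherwise.

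The main obstacle is extracting the uniform exponent $1/16$ with the symmetric $(N_3/N_1+N_2^{-1})$ structure. The two terms correspond to complementary geometric regimes: $N_3/N_1$ quantifies the transversality between the high-frequency sphere of radius $N_1$ and the shell of size $N_3$, while $N_2^{-1}$ reflects the gain from the ball decomposition being strictly smaller than $N_1$. Both reduce to the same divisor-type lattice-counting estimate on irrational $\mathbb{T}^3$, and the genericity of $\beta=(\beta_1,\beta_2,\beta_3)$ enters here to exclude the degenerate arithmetic resonances that would obstruct the gain on rational tori. Since this counting estimate and the transference to $Y^0$ are both established in the cited works, I would invoke them as black boxes rather than reproducing the arguments.
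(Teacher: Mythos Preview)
Your overall plan matches the paper's exactly: the paper simply cites \cite{Str}, Proposition~4.1, for (\ref{tri}) (remarking that the exponent $1/16$ comes from tracking the constants), and then deduces (\ref{tri2}) from (\ref{tri}), the Strichartz bound (\ref{std2}), and the $U^p/V^p$ interpolation lemma (\cite{HTT}, Lemma~2.4, or \cite{GOW}, Lemma~5.3), via the same argument as in \cite{HTT}, Proposition~3.5---precisely the transference you describe. Since you too intend to invoke these results as black boxes, the proposal is correct for the purposes of this paper.

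One caution about your informal sketch of the argument behind (\ref{tri}): as written it does not close. With the bilinear bound you state (prefactor $N_2$) and the Bernstein bound $\|u_3\|_{L^\infty_{t,x}}\lesssim N_3^{3/2}\|P_{N_3}f_3\|_{L^2}$, H\"older and the orthogonal sum over $\mathcal{B}$ yield a prefactor $N_2N_3^{3/2}$, which is a factor $N_3^{1/2}$ too large; since $(N_3/N_1+1/N_2)^{1/16}\leq 1$, no reshuffling of the gain absorbs this loss, and your ``regime matching'' cannot repair it. More structurally, the gain $N_3/N_1$ involves the \emph{smallest} frequency $N_3$ and so cannot emerge from a bilinear pairing of $u_1$ and $u_2$ followed by putting $u_3$ in $L^\infty$. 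In \cite{HTT,Str} the low-frequency factor $u_3$ enters the refinement directly: after the almost-orthogonal decomposition at scale $N_2$, all three factors are placed at finite Strichartz exponents, and the $(N_3/N_1)^\delta$ improvement comes from a further localization that sees the separation between the high and low frequencies. This does not affect your proposal, since you ultimately treat (\ref{tri}) as a citation, but the specific mechanism you outline would not reproduce the stated bound.
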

\begin{proof} The inequality (\ref{tri}) is proved in \cite{Str}, Proposition 4.1 (see also \cite{HTT} and \cite{GOW} for rational and partially rational cases). The constant $1/16$ follows from direct computations. After choosing suitable extensions of $u_j$, (\ref{tri2}) is a consequence of (\ref{tri}), Strichartz estimates (\ref{std2}) and an interpolation lemma for $U^p$ and $V^p$ spaces (see \cite{HTT}, Lemma 2.4 or \cite{GOW}, Lemma 5.3), in the same way as in the proof of Proposition 3.5 in \cite{HTT}. 
\end{proof}
\subsection{Global well-posedness in $H^1$}\label{local} In this section we summarize the existing small energy global well-posedness results for (\ref{nls}).
\begin{prop}\label{locale} Let $\lambda$ and $\eta$ be fixed as in the statement of Theorem \ref{main}. Given any initial data $f\in H^1(\mathbb{T}^3)$ such that $E[f]\leq\eta^2$, there exists a unique solution \[u\in\bigcap_{T>0}X^{1}([-T,T])\] to (\ref{nls}) with initial data $u(0)=f$. The energy $E[u]$ is conserved and one has \[\|u\|_{X^1(I)}\lesssim\eta\] for any time interval $I$ with $|I|\lesssim 1$. Moreover, for any $s>1$, if $f\in H^s(\mathbb{T}^3)$, then $u(t)\in H^s(\mathbb{T}^3)$ for all time, and \[\|u\|_{X^s(I)}\lesssim\|u(t)\|_{H^s(\mathbb{T}^3)}\] for any time interval $I$ with $|I|\lesssim 1$ and any $t\in I$.
\end{prop}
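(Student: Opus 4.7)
The plan is to follow the Herr--Tataru--Tzvetkov framework, as adapted to irrational tori in \cite{GOW, Str}, since all the necessary ingredients have already been set up in Section \ref{prep}. The central nonlinear estimate to establish is the quintilinear bound
$$
\|\mathcal{I}(u_1\widetilde{u}_2u_3\widetilde{u}_4u_5)\|_{X^1(I)}\lesssim\prod_{j=1}^{5}\|u_j\|_{X^1(I)},\qquad |I|\lesssim 1,
$$
for any pattern of complex conjugations. By the duality formula (\ref{duality}), it suffices to bound the six-linear spacetime pairing $\int_{I\times\mathbb{T}^3}u_1\widetilde{u}_2u_3\widetilde{u}_4u_5\,\overline{v}\,dxdt$ for a test function $v$ with $\|v\|_{Y^{-1}(I)}\leq 1$. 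I would insert the Littlewood--Paley decompositions $u_j=\sum_{N_j}P_{N_j}u_j$ and $v=\sum_{N_0}P_{N_0}v$, split the six dyadic factors into two triples by size, apply Cauchy--Schwarz in spacetime, and then use the trilinear Strichartz estimate (\ref{tri2}) on each triple. The $1/16$ fractional gain in (\ref{tri2}) provides the off-diagonal decay needed to sum the dyadic parameters, with the $\langle k\rangle$ weight placed on the highest-frequency factor. The same scheme, with the weight replaced by $\langle k\rangle^s$ on the highest input and the output, yields the parallel estimate
$$
\|\mathcal{I}(|u|^4u)\|_{X^s(I)}\lesssim\|u\|_{X^s(I)}\|u\|_{X^1(I)}^{4},\qquad |I|\lesssim 1.
$$

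Granted these nonlinear estimates, small-data local well-posedness on an interval $|I|\lesssim 1$ is a routine contraction-mapping argument: the Duhamel map $\Phi(u)(t)=e^{it\Delta}f-i\mathcal{I}(|u|^4u)(t)$ sends the ball $\{u:\|u\|_{X^1(I)}\leq C\eta\}$ into itself, using (\ref{std1}) to control the linear part by $\|f\|_{H^1}\lesssim\eta$ (together with mass conservation for the $L^2$ component) and the quintilinear estimate to bound the nonlinear part by $(C\eta)^5\ll\eta$; contractivity follows from the analogous difference estimate. This gives the claimed existence, uniqueness, and the bound $\|u\|_{X^1(I)}\lesssim\eta$ on any such $I$. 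To reach all of $\mathbb{R}$, I would approximate by smooth solutions to propagate conservation of $E[u]$ and of the mass; this keeps $\|u(t)\|_{H^1}\lesssim\eta$ uniformly in $t$, and the local theory can then be iterated on consecutive unit intervals to produce the global solution in $\bigcap_{T>0}X^1([-T,T])$.

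For persistence of regularity with $s>1$, fix $I=[t_\ast,t_\ast+T]$ with $T\lesssim 1$. The $X^s$ nonlinear estimate, the linear estimate (\ref{std1}), and the $X^1$ bound of the previous paragraph give
$$
\|u\|_{X^s(I)}\lesssim\|u(t_\ast)\|_{H^s}+\|u\|_{X^s(I)}\|u\|_{X^1(I)}^{4}\lesssim\|u(t_\ast)\|_{H^s}+C\eta^{4}\|u\|_{X^s(I)}.
$$
Since $\eta$ is small the last term can be absorbed, yielding $\|u\|_{X^s(I)}\lesssim\|u(t_\ast)\|_{H^s}$. Combined with the embedding (\ref{embed3}), this propagates the $H^s$ norm forward by a bounded factor over each unit interval, so iterating on consecutive intervals gives $u(t)\in H^s$ for all $t\in\mathbb{R}$.

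The only delicate step is the quintilinear $X^1$ estimate itself: being energy-critical, it tolerates no loss, and the whole scheme breaks down without the fractional gain in (\ref{tri2}). However, that gain is precisely what \cite{Str} provides for generic irrational tori, so the remaining frequency book-keeping, though somewhat intricate, is entirely standard and parallels the rational case treated in \cite{HTT}.
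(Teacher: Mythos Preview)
Your proposal is correct and takes essentially the same approach as the paper. The paper's proof simply cites \cite{Str} for the quintilinear estimate $\|\mathcal{I}(\widetilde{u}_1\cdots\widetilde{u}_5)\|_{X^s[0,T)}\lesssim\sum_j\|u_j\|_{X^s[0,T)}\prod_{l\neq j}\|u_l\|_{X^1[0,T)}$ and declares the remaining contraction-mapping, energy-conservation, and bootstrap steps standard; your write-up fills in exactly those standard details (and the sketch of the quintilinear bound via duality, Littlewood--Paley, and the trilinear estimate (\ref{tri2}) is precisely how \cite{HTT,Str} prove it).
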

\begin{proof} The following nonlinear estimate is proved in \cite{Str}, for all $s\geq 1$ and $T\leq 1$:\begin{equation}\label{local0}\|\mathcal{I}(\widetilde{u_1}\cdots \widetilde{u_5})\|_{X^s[0,T)}\lesssim\sum_{j=1}^5\|u_j\|_{X^s[0,T)}\prod_{l\neq j}\|u_l\|_{X^1[0,T)}.\end{equation} Choosing $s=1$, clearly (\ref{local0}) implies small data  local well-posedness (up to time $T=1$) in $H^1$, and conservation of energy gives global well-posedness. Moreover local-in-time control for $X^1(I)$ norm is automatic from the proof of local well-posedness, and local-in-time $X^s(I)$ control follows from (\ref{local0}), the known $X^1(I)$ bound and a bootstrap argument. The arguments are standard and we omit the details.
\end{proof}
\section{Proof of Theorem \ref{main}}\label{prof}
\subsection{Increment of energy: the $I$ method}\label{imet}
It is possible to use the high-low method of Bourgain in this subsection; however for technical reasons we have chosen to use the (upside-down) \emph{I-method}, developed by Colliander-Keel-Staffilani-Takaoka-Tao, which is similar to Bourgain's method in spirit. Recall the multiplier $\mathcal{D}=\mathcal{D}_N$ defined in (\ref{mult}).

\begin{prop}\label{incre} 
 Let $N\geq 1$ be fixed. Suppose $u$ is a solution to (\ref{nls}) on $\mathbb{R}\times\mathbb{T}^3$ such that $\| \mathcal{D} u(0) \|_{H^1} \lesssim \eta$, then we have
\begin{multline}\label{increnrg}\big|E[\mathcal{D}u(t)]-E[\mathcal{D}u(0)]\big|\lesssim\eta^2\sum_{K_0}\min\bigg(1,\frac{K_0}{N}\bigg)\\\times\sum_{K_1,K_3,K_3\lesssim K_0}\prod_{j=1}^3\bigg(\frac{K_j}{K_0}\bigg)^{1/12}\cdot\prod_{j=0}^3\sup_{5\leq q'\leq 12} K_j^{\frac{5}{q'}-\frac{1}{2}}\|P_{K_j}\mathcal{D}u\|_{L_{t,x}^{q'}[0,T]}.\end{multline} uniformly for all $0\leq t\leq T\leq 1$.
\end{prop}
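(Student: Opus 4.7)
The proof is a standard upside-down $I$-method energy-increment calculation. Set $w := \mathcal{D}u$ and apply $\mathcal{D}$ to \eqref{nls} to obtain $(i\partial_t + \Delta)w = \mathcal{D}(|u|^4 u)$, so $w$ satisfies the quintic NLS only up to the commutator defect $G := \mathcal{D}(|u|^4 u) - |w|^4 w$. Since $E$ is conserved when $w$ exactly solves the quintic NLS, a direct computation gives
\[ \frac{d}{dt} E[w] \;=\; \mathrm{Im} \int_{\mathbb{T}^3} \overline{\bigl(|w|^4 w - \Delta w\bigr)} \cdot G \, dx,\]
and integrating in time reduces matters to estimating this spacetime integral. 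On the Fourier side, $G$ is a five-fold convolution in $\widetilde{\widehat{u}}$ with multiplier $\mu = m(k) - \prod_{j=1}^5 m(k_j)$, where $k = k_1 - k_2 + k_3 - k_4 + k_5$. The defining properties of $m$ in \eqref{mult}, together with a telescoping argument of the same flavour as \eqref{leibniz}, yield the commutator bound $|\mu| \lesssim \min(1, K_0/N) \prod_j m(k_j)$, with $K_0 := \max_j |k_j|$. Writing $\widehat{u}(k_j) = m(k_j)^{-1}\widehat{w}(k_j)$ then expresses $G$ as a 5-linear form in pieces of $w$ whose symbol has size $\min(1, K_0/N)$; this is the source of the corresponding factor in \eqref{increnrg}.

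Next I would fully Littlewood-Paley decompose every factor of $w$, turning the integrated identity into a sum of dyadic multilinear expressions: a 6-linear piece from $\overline{\Delta w} \cdot G$ and a 10-linear piece from $\overline{|w|^4 w} \cdot G$. By frequency conservation the two largest scales are comparable; I would label their common value $K_0$ and order the remaining scales decreasingly. For each fixed dyadic block, I would bound via H\"older in spacetime combined with the improved trilinear Strichartz estimate (Proposition \ref{trilinear}) applied to three factors at a time, typically the lowest-frequency ones; each such application produces a gain of the form $(K_{\min}/K_{\max})^{1/16}$. Combining several different triple-groupings of the factors and merging the gains by H\"older then allows one to produce a gain at \emph{each} of the three smallest scales, which after a slight relaxation of the exponent takes the form $\prod_{j=1}^3 (K_j/K_0)^{1/12}$; the loss $1/16 \to 1/12$ buys room for summability. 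Four of the Littlewood-Paley pieces are kept free and produce the norms $K_j^{5/q'-1/2}\|P_{K_j}\mathcal{D}u\|_{L^{q'}_{t,x}[0,T]}$ on the right side of \eqref{increnrg}. The remaining factors are absorbed into $\eta^2$ using the energy bound from Proposition \ref{locale}: for the 6-linear term, two factors at scale $\sim K_0$ are bounded using $\|w\|_{L^\infty_t H^1_x} \lesssim \eta$; for the 10-linear term, six low-order factors are bounded using $\|w\|_{L^\infty_t L^6_x} \lesssim \eta^{1/3}$.

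The main technical obstacle is the multilinear step just described. One must simultaneously arrange (i) that the $u$-factors inside $G$ are correctly converted to $\mathcal{D}u$-factors so that the $\min(1, K_0/N)$ commutator gain survives; (ii) that the two largest-frequency factors, necessarily both of scale $\sim K_0$ by frequency addition, are precisely the ones absorbed into $\eta^2$; and (iii) that the per-application trilinear gain $(K_{\min}/K_{\max})^{1/16}$ from Proposition \ref{trilinear} is smeared across three separate indices to produce the factor $\prod_{j=1}^3(K_j/K_0)^{1/12}$, which will require a combinatorial argument applying Proposition \ref{trilinear} to several different triples of inputs rather than a single application. Relabeling the three smallest free scales as $K_1, K_2, K_3 \lesssim K_0$ and summing over the absorbed dyadic parameters (which converges because the absorbed factors carry negative powers of $K$ through the $H^1$ or $L^6$ bound) then yields \eqref{increnrg}, uniformly for $0 \leq t \leq T \leq 1$.
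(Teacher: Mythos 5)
Your proposal captures the correct starting point (differentiating $E[\mathcal{D}u]$, reducing to the commutator defect $\mathcal{R}=\mathcal{D}(|u|^4u)-|\mathcal{D}u|^4\mathcal{D}u$, Littlewood--Paley decomposing and extracting four free factors), but the central mechanism you propose for the $\min(1,K_0/N)$ factor is wrong, and the other steps do not match the statement being proved.

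The commutator bound you claim, $|\mu|\lesssim\min(1,K_0/N)\prod_j m(k_j)$ with $K_0:=\max_j|k_j|$, is \emph{vacuously} $\sim 1$ on the support of $\mu$: since $m\equiv 1$ below frequency $N$, the multiplier $\mu$ vanishes unless $\max_j|k_j|\gtrsim N$, and then $\min(1,K_0/N)\sim 1$. In \eqref{increnrg} the index $K_0$ is \emph{not} the overall maximum frequency --- it is the largest of the four \emph{free} dyadic parameters, i.e. the third-largest frequency when the two highest are comparable and $\gg N$. The gain $K_0/N$ for $K_0\ll N$ in that regime is a genuine commutator gain: in the paper's case 2(b) one writes the relevant piece of $\partial_j\mathcal{R}$ as $[\mathcal{D},G]F$ with $F$ at frequency $\sim N_1\gg N$ and $G$ at frequency $\lesssim N_2\ll N$, and a Coifman--Meyer estimate on the difference $\theta(k/N)-\theta((k-m)/N)$ produces the factor $N_2/N$. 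Your derivation never identifies this case and never produces that gain, which is exactly what makes the low-$K$ part of the sum in Corollary~\ref{coro} converge. Relatedly, your own bookkeeping is inconsistent: you state that the two largest frequencies, both $\sim K_0$, are absorbed into $\eta^2$, yet \eqref{increnrg} requires $K_0$ to index a \emph{free} Littlewood--Paley piece carrying the norm $\sup_{q'}K_0^{5/q'-1/2}\|P_{K_0}\mathcal{D}u\|_{L^{q'}_{t,x}[0,T]}$.

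Two further mismatches. First, your reliance on the trilinear Strichartz estimate (Proposition~\ref{trilinear}) is out of place here; that estimate lives in $U^p/V^p$ and outputs $L^2_{t,x}$, so it would naturally produce $X^1$ or $Y^0$ norms on the right-hand side rather than the $L^{q'}_{t,x}$ Strichartz norms that \eqref{increnrg} must deliver to interface with the long-time norms $S^{q}_{N,J}$. The paper obtains the $\prod_{j=1}^3(K_j/K_0)^{1/12}$ gain entirely from the scaling in ordinary H\"older/Strichartz $L^{q'}$ estimates (e.g. $L^6/L^{30}$ for the $|\mathcal{D}u|^4\overline{\mathcal{D}u}\cdot\mathcal{R}$ term, $L^{60/17}/L^{60/11}/L^{12}$ for the gradient term) combined with geometric summation over the absorbed lowest frequencies; Proposition~\ref{trilinear} is reserved for Proposition~\ref{nonlctrl}. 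Second, when $N_0\sim N_1\gg N_2$ one must introduce the translation-ball decomposition \eqref{trans} so that the sum over the two comparable high frequencies can be executed by orthogonality and Cauchy--Schwarz in $X^1$ without a logarithmic loss; this step is absent from your outline and is not optional.
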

\begin{proof} From the assumptions, we know that \[\|u(0)\|_{H^1}\lesssim \eta,\quad \|u(0)\|_{H^s}\lesssim \eta N^{s-1}.\] By Proposition \ref{locale}, one has that\[\|u\|_{X^{1}[-1,2]}\lesssim\eta,\quad \|u\|_{X^{1}[-1,2]}\lesssim \eta N^{s-1},\] which implies $\|\mathcal{D}u\|_{X^{1}[-1,2]}\lesssim\eta$. By considering a suitable extension of $u$, we may assume \begin{equation}\label{dbound}\|\mathcal{D}u\|_{X^{1}}\lesssim\eta.\end{equation}

Now let us compute the time evolution of $E[\mathcal{D}u]$. In fact, one has\[(i\partial_t-\Delta)(\mathcal{D}u)=|\mathcal{D}u|^{4}(\mathcal{D}u)+\mathcal{R},\] where\[\mathcal{R}=\mathcal{D}(|u|^{4}u)-|\mathcal{D}u|^{4}(\mathcal{D}u).\] Now by conservation of energy for (\ref{nls}), one has that
\begin{equation}\begin{aligned}\partial_tE[\mathcal{D}u]&=\int_{\mathbb{T}^3}\left\{\sum_{j=1}^3\beta_j\Re(\partial_j\overline{\mathcal{D}u}\cdot\partial_j(\mathcal{D}u)_t)+\Re(\overline{\mathcal{D}u}\cdot(\mathcal{D}u)_t)\cdot|\mathcal{D}u|^{4}\right\}\,\mathrm{d}x\\
&=\sum_{j=1}^3\beta_j\Im\int_{\mathbb{T}^3}(\partial_j\overline{\mathcal{D}u}\cdot \partial_j\mathcal{R})\,\mathrm{d}x+\Im\int_{\mathbb{T}^3}|\mathcal{D}u|^{4}\overline{\mathcal{D}u}\cdot\mathcal{R}\,\mathrm{d}x.\end{aligned}\end{equation}
Thus, upon integrating in $t$, we reduce to estimating the space-time integrals\begin{equation}\label{estimate1}\int_{[0,T]\times\mathbb{T}^3}|\mathcal{D}u|^{4}\overline{\mathcal{D}u}\cdot\mathcal{R}\,\mathrm{d}x\mathrm{d}t\end{equation} and
\begin{equation}\label{estimate2}\int_{[0,T]\times\mathbb{T}^3}(\partial_j\overline{\mathcal{D}u}\cdot \partial_j\mathcal{R})\,\mathrm{d}x\mathrm{d}t.\end{equation} Let us first consider the term (\ref{estimate1}). This can be written as \[(\ref{estimate1})=\sum_{N_0,\cdots, N_9}\mathcal{M}_{N_0,\cdots,N_9},\] where \begin{multline}\mathcal{M}_{N_0,\cdots,N_9}=\int_{[0,T]\times\mathbb{T}^3}\mathcal{D}P_{N_0}u\cdot\overline{\mathcal{D}P_{N_1}u}\cdot \mathcal{D}P_{N_2}u\cdot \overline{\mathcal{D}P_{N_3}u}\cdot \mathcal{D}P_{N_4}u\\\times\big(\mathcal{D}(P_{N_5}u\cdot\overline{P_{N_6}u}\cdot P_{N_7}u\cdot \overline{P_{N_8}u}\cdot P_{N_9}u)-\mathcal{D}P_{N_5}u\cdot\overline{\mathcal{D}P_{N_6}u}\cdot \mathcal{D}P_{N_7}u\cdot \overline{\mathcal{D}P_{N_8}u}\cdot \mathcal{D}P_{N_9}u\big)\,\mathrm{d}x\mathrm{d}t.\end{multline} Clearly $\mathcal{M}_{N_0,\cdots,N_9}$ vanishes if $N':=\max_{0\leq j\leq 9}N_j\ll N$, and if $N'\gtrsim N$ we will estimate the two terms $\mathcal{M}_{N_0,\cdots,N_9}$ separately. Then, without loss of generality we can assume $N_0\geq\cdots\geq N_9$, so by (\ref{leibniz}) and H\"{o}lder, we have that\[|\mathcal{M}_{N_0,\cdots,N_9}|\lesssim \prod_{j=0}^4\|P_{N_j}\mathcal{D}\widetilde{u}\|_{L_{t,x}^{6}[0,T]}\cdot\prod_{j=5}^9 \|P_{N_j}\mathcal{D}\widetilde{u}\|_{L_{t,x}^{30}[0,T]}.\] By using (\ref{std2}) we know that\[\|P_{N_j}\mathcal{D}\widetilde{u}\|_{L_{t,x}^{30}[0,T]}\lesssim N_j^{4/3}\|P_{N_j}\mathcal{D}u\|_{X^0}\lesssim \eta N_j^{1/3}\] for $5\leq j\leq 9$, as well as \[\|P_{N_4}\mathcal{D}\widetilde{u}\|_{L_{t,x}^{6}[0,T]}\lesssim N_4^{2/3}\|P_{N_4}\mathcal{D}u\|_{X^0}\lesssim \eta N_4^{-1/3},\]thus \[|\mathcal{M}_{N_0,\cdots,N_9}|\lesssim\eta^6\bigg(\frac{N_5\cdots N_9}{N_0\cdots N_4}\bigg)^{1/3}\prod_{j=0}^3 N_j^{1/3}\|P_{N_j}\mathcal{D}\widetilde{u}\|_{L_{t,x}^{6}[0,T]}.\] Since $N_j$ is nonincreasing in $j$, we have\[\sum_{N_4,\cdots, N_9}\bigg(\frac{N_5\cdots N_9}{N_0\cdots N_4}\bigg)^{1/3}\lesssim\bigg(\frac{N_3^3}{N_0N_1N_2}\bigg)^{1/3}\lesssim\bigg(\frac{N_3}{N_0}\bigg)^{1/3}\lesssim\prod_{j=1}^3\bigg(\frac{N_j}{N_0}\bigg)^{1/9}.\] Let $N_j=K_{j}$ for $0\leq j\leq 3$ where $K_0\gtrsim N$, we get that\[|(\ref{estimate1})|\lesssim\sum_{K_0\gtrsim N}\sum_{K_1,K_3,K_3\leq K_0}\prod_{j=1}^3\bigg(\frac{K_j}{K_0}\bigg)^{1/9}\cdot\prod_{j=0}^3 K_j^{1/3}\|P_{K_j}\mathcal{D}\widetilde{u}\|_{L_{t,x}^{6}[0,T]},\] which is acceptable for (\ref{incre}), since $u$ and $\widetilde{u}$ are the same in terms of Lebesgue norms. 

Now let us consider (\ref{estimate2}). Note that \[\partial_j\mathcal{R}=3[\mathcal{D}(|u|^4\partial_ju)-|\mathcal{D}u|^4\mathcal{D}\partial_ju]+2[\mathcal{D}(u^3\overline{u}\partial_j\overline{u})-(\mathcal{D}u)^3\overline{\mathcal{D}u}\mathcal{D}\overline{\partial_ju}]:=3\mathcal{R}_1+2\mathcal{R}_2.\] The estimates for $\mathcal{R}_1$ and $\mathcal{R}_2$ will be much similar, so we will consider $\mathcal{R}_1$ only. By Littlewood-Paley decomposition, we can reduce to studying\begin{multline}\mathcal{N}_{N_0,\cdots, N_5}:=\int_{[0,T]\times\mathbb{T}^3}P_{N_0}\partial_j\overline{\mathcal{D}u}\cdot\big[\mathcal{D}(P_{N_1}\partial_ju\cdot \overline{P_{N_2}u}\cdot P_{N_3}u\cdot \overline{P_{N_4}u}\cdot P_{N_5}u)\\-\mathcal{D}P_{N_1}\partial_ju\cdot \overline{\mathcal{D}P_{N_2}u}\cdot \mathcal{D}P_{N_3}u\cdot \overline{\mathcal{D}P_{N_4}u}\cdot \mathcal{D}P_{N_5}u)\big].\end{multline} Without loss of generality we can assume $N_2\geq N_3\geq N_4\geq N_5$. We now consider two cases:

(1) Suppose $N_2\gtrsim N':=\max(N_0,N_1)$. Note that $N_2\gtrsim N$ since otherwise $\mathcal{N}_{N_0,\cdots, N_5}$ vanishes; using (\ref{leibniz}) and H\"{o}lder one directly estimates\begin{equation}|\mathcal{N}_{N_0,\cdots, N_5}|\lesssim \|P_{N_0}\partial_j\mathcal{D}\widetilde{u}\|_{L_{t,x}^{60/17}[0,T]}\cdot \|P_{N_1}\partial_j\mathcal{D}\widetilde{u}\|_{L_{t,x}^{60/17}[0,T]}\cdot \|P_{N_2}\mathcal{D}\widetilde{u}\|_{L_{t,x}^{60/11}[0,T]}\cdot \prod_{j=3}^5\|P_{N_j}\mathcal{D}\widetilde{u}\|_{L_{t,x}^{12}[0,T]}.\end{equation} By (\ref{std2}) we know that\[\|P_{N_j}\partial_j\mathcal{D}\widetilde{u}\|_{L_{t,x}^{60/17}[0,T]}\lesssim N_j^{1/12}\|P_{N_j}\mathcal{D}u\|_{X^1}\lesssim \eta N_j^{1/12}\] for $0\leq j\leq 1$, thus\[|\mathcal{N}_{N_0,\cdots, N_5}|\lesssim\eta^2\bigg(\frac{N_0N_1N_3N_4N_5}{N_2^5}\bigg)^{1/12} N_2^{5/12}\|P_{N_2}\mathcal{D}\widetilde{u}\|_{L_{t,x}^{60/11}[0,T]}\cdot\prod_{j=3}^5 N_j^{-1/12} \|P_{N_j}\mathcal{D}\widetilde{u}\|_{L_{t,x}^{12}[0,T]}.\] Notice that $\max(N_0,N_1)\lesssim N_2$, we get that\[\sum_{N_0,N_1\lesssim N_2}\bigg(\frac{N_0N_1N_3N_4N_5}{N_2^5}\bigg)^{1/12}\lesssim\bigg(\frac{N_3N_4N_5}{N_2^3}\bigg)^{1/12}.\] Let $K_j=N_{j+2}$ for $0\leq j\leq 3$, we get \[|\text{Case 1}|\lesssim\eta^2\sum_{K_0\gtrsim N}\sum_{K_1,K_3,K_3\leq K_0}\prod_{j=1}^3\bigg(\frac{K_j}{K_0}\bigg)^{1/12}K_0^{5/12}\|P_{K_0}\mathcal{D}\widetilde{u}\|_{L_{t,x}^{60/11}[0,T]}\cdot\prod_{j=1}^3 K_j^{-1/12}\|P_{K_j}\mathcal{D}\widetilde{u}\|_{L_{t,x}^{12}[0,T]},\] which is acceptable for (\ref{incre}).

(2) Suppose $N_2\ll N'$, then $N_0\sim N_1\gg N_2$. We shall use the partition (\ref{trans}) and decompose correspondingly \[P_{N_1}u=\sum_{\mathcal{B}}P_{N_1}P_{\mathcal{B}}u,\] where $\mathcal{B}$ runs balls of radius $N_2$ centered at points of $(N_2\mathbb{Z})^3$. Then we have, by considering Fourier support and orthogonality, that \begin{multline}\mathcal{N}_{N_0,\cdots, N_5}=\sum_{\mathcal{B}}\int_{[0,T]\times\mathbb{T}^3}P_{N_0}P_{2^{10}\mathcal{B}}\partial_j\overline{\mathcal{D}u}\cdot\big[\mathcal{D}(P_{N_1}P_{\mathcal{B}}\partial_ju\cdot \overline{P_{N_2}u}\cdot P_{N_3}u\cdot \overline{P_{N_4}u}\cdot P_{N_5}u)\\-\mathcal{D}P_{N_1}P_{\mathcal{B}}\partial_ju\cdot \overline{\mathcal{D}P_{N_2}u}\cdot \mathcal{D}P_{N_3}u\cdot \overline{\mathcal{D}P_{N_4}u}\cdot \mathcal{D}P_{N_5}u)\big].\end{multline} 

(a) If $N_2\gtrsim N$, then by H\"{o}lder one has \begin{multline}|\mathcal{N}_{N_0,\cdots, N_5}|\lesssim \sum_{\mathcal{B}}\|P_{N_0}P_{2^{10}\mathcal{B}}\partial_j\mathcal{D}\widetilde{u}\|_{L_{t,x}^{60/17}[0,T]}\cdot \|P_{N_1}P_{\mathcal{B}}\partial_j\mathcal{D}\widetilde{u}\|_{L_{t,x}^{60/17}[0,T]}\\\times \|P_{N_2}\mathcal{D}\widetilde{u}\|_{L_{t,x}^{60/11}[0,T]}\cdot \|P_{N_3}\mathcal{D}\widetilde{u}\|_{L_{t,x}^{12}[0,T]}\cdot \|P_{N_3}\mathcal{D}\widetilde{u}\|_{L_{t,x}^{12}[0,T]}\cdot \|P_{N_5}\mathcal{D}\widetilde{u}\|_{L_{t,x}^{12}[0,T]}.\end{multline} Repeating the arguments in part (1) above, using (\ref{std2}) but with $P_{N}$ replaced by $P_{\mathcal{B}}$, we get that\begin{multline}|\mathcal{N}_{N_0,\cdots, N_5}|\lesssim\sum_{\mathcal{B}}\|P_{N_1}P_{\mathcal{B}}\mathcal{D}u\|_{X^1}\|P_{N_0}P_{2^{10}\mathcal{B}}\mathcal{D}u\|_{X^1}\bigg(\frac{N_3N_4N_5}{N_2^3}\bigg)^{1/12}\\\times N_2^{5/12}\|P_{N_2}\mathcal{D}\widetilde{u}\|_{L_{t,x}^{60/11}[0,T]}\cdot\prod_{j=3}^5 N_j^{-1/12} \|P_{N_j}\mathcal{D}\widetilde{u}\|_{L_{t,x}^{12}[0,T]}.  \end{multline} By the definition of $X^1$ norm, and notice that for fixed $N_0$ (or $N_1$) there are at most $O(1)$ choices for $N_1$ (or $N_0$), we know that\[\sum_{N_0,N_1}\sum_{\mathcal{B}}\|P_{N_0}P_{2^{10}\mathcal{B}}\mathcal{D}u\|_{X^1}^2\lesssim\|\mathcal{D}u\|_{X^1}^2\lesssim \eta^2,\] and similarly for the sum with $P_{N_1}P_{\mathcal{B}}$ instead of $P_{N_0}P_{2^{10}\mathcal{B}}$, thus by Cauchy-Schwartz we get\[\sum_{N_0,N_1}|\mathcal{N}_{N_0,\cdots,N_5}|\lesssim\eta^2\bigg(\frac{N_3N_4N_5}{N_2^3}\bigg)^{1/12}N_2^{5/12}\|P_{N_2}\mathcal{D}\widetilde{u}\|_{L_{t,x}^{60/11}[0,T]}\cdot\prod_{j=3}^5 N_j^{-1/12} \|P_{N_j}\mathcal{D}\widetilde{u}\|_{L_{t,x}^{12}[0,T]}.\] Writing $K_j=N_{j+2}$ for $0\leq j\leq 3$, we obtain that \[|\text{Case 2a}|\lesssim\eta^2\sum_{K_0\gtrsim N}\sum_{K_1,K_3,K_3\leq K_0}\prod_{j=1}^3\bigg(\frac{K_j}{K_0}\bigg)^{1/12}K_0^{5/12}\|P_{K_0}\mathcal{D}\widetilde{u}\|_{L_{t,x}^{60/11}[0,T]}\cdot\prod_{j=1}^3 K_j^{-1/12}\|P_{K_j}\mathcal{D}\widetilde{u}\|_{L_{t,x}^{12}[0,T]},\] which is acceptable for (\ref{incre}).

(b) Now suppose $N_2\ll N$, then we must have $N_1\gtrsim N$. By the definition of $\mathcal{D}$, we can simplify \[\mathcal{N}_{N_0,\cdots, N_5}=\sum_{\mathcal{B}}\int_{[0,T]\times\mathbb{T}^3}P_{N_0}P_{2^{10}\mathcal{B}}\partial_j\overline{\mathcal{D}u}\cdot[\mathcal{D},G]P_{N_1}P_{\mathcal{B}}\partial_ju,\] where \[G:=\overline{P_{N_2}u}\cdot P_{N_3}u\cdot \overline{P_{N_4}u}\cdot P_{N_5}u.\] By H\"{o}lder we have\[\|G\|_{L_{t,x}^{30/13}[0,T]}\lesssim \bigg(\frac{N_3N_4N_5}{N_2^5}\bigg)^{1/12}N_2^{5/12}\|P_{N_2}\widetilde{u}\|_{L_{t,x}^{60/11}[0,T]}\cdot\prod_{j=3}^5 N_j^{-1/12} \|P_{N_j}\widetilde{u}\|_{L_{t,x}^{12}[0,T]}.\] Now let $F=P_{N_1}P_{\mathcal{B}}\partial_ju$, then $\widehat{F}(k,t)$ is supported in $|k|\sim N_1$ and $\widehat{G}(k,t)$ is supported in $|k|\lesssim N_2$. For fixed $t$, consider the bilinear expression $[\mathcal{D},G]F$ defined by \[\mathcal{F}[\mathcal{D},G]F(k)=\sum_{m}\chi_1\bigg(\frac{k}{N_1}\bigg)\chi\bigg(\frac{m}{N_2}\bigg)\bigg[\theta\bigg(\frac{k}{N}\bigg)-\theta\bigg(\frac{k-m}{N}\bigg)\bigg]\widehat{F}(k-m)\widehat{G}(m),\] where $\chi_1(z)$ is supported in $|z|\sim 1$, and $\chi_2(z)$ supported in $|z|\lesssim 1$. Since\[\theta\bigg(\frac{k}{N}\bigg)-\theta\bigg(\frac{k-m}{N}\bigg)=\frac{m}{N}\cdot \int_0^1\nabla\theta\bigg(\frac{k-\nu m}{N}\bigg)\,\mathrm{d}\nu,\] and we are in the region where $|k|\sim N_1\gtrsim N$ and $|m|\lesssim N_2\ll N$, we can use Coifman-Meyer theory and transference principle to conclude that\[\|[\mathcal{D},G]F\|_{L_{t,x}^{60/43}[0,T]}\lesssim \frac{N_2}{N}\|F\|_{L_{t,x}^{60/17}[0,T]}\|G\|_{L_{t,x}^{30/13}[0,T]},\] which gives that\begin{multline}|\mathcal{N}_{N_0,\cdots, N_5}|\lesssim\sum_{\mathcal{B}}\|P_{N_0}P_{2^{10}\mathcal{B}}\partial_j\overline{\mathcal{D}u}\|_{L_{t,x}^{60/17}[0,T]}\cdot \frac{N_2}{N}\|F\|_{L_{t,x}^{60/17}[0,T]}\|G\|_{L_{t,x}^{30/13}[0,T]}\\\lesssim\sum_{\mathcal{B}}\frac{N_2^{7/6}}{N}\|G\|_{L_{t,x}^{30/13}[0,T]}\cdot \|P_{N_1}P_{\mathcal{B}}\mathcal{D}u\|_{X^1}\|P_{N_0}P_{2^{10}\mathcal{B}}\mathcal{D}u\|_{X^1}\end{multline}by using (\ref{std2}). Summing over $\mathcal{B}$, $N_0$ and $N_1$ just as in part (2a) above, we get that \[\sum_{N_0,N_1}|\mathcal{N}_{N_0,\cdots,N_5}|\lesssim\eta^2\frac{N_2}{N}\bigg(\frac{N_3N_4N_5}{N_2^3}\bigg)^{1/12}N_2^{5/12}\|P_{N_2}\widetilde{u}\|_{L_{t,x}^{60/11}[0,T]}\cdot\prod_{j=3}^5 N_j^{-1/12} \|P_{N_j}\widetilde{u}\|_{L_{t,x}^{12}[0,T]}.\] Writing $K_j=N_{j+2}$ for $0\leq j\leq 3$, we obtain that \[|\text{Case 2b}|\lesssim\eta^2\sum_{K_0\ll N}\frac{K_0}{N}\sum_{K_1,K_3,K_3\leq K_0}\prod_{j=1}^3\bigg(\frac{K_j}{K_0}\bigg)^{1/12}K_0^{5/12}\|P_{K_0}\widetilde{u}\|_{L_{t,x}^{60/11}[0,T]}\cdot\prod_{j=1}^3 K_j^{-1/12}\|P_{K_j}\widetilde{u}\|_{L_{t,x}^{12}[0,T]},\] which is acceptable for (\ref{incre}).
\end{proof}
\begin{cor}\label{coro} Suppose $\gamma<1/6$. Suppose $u$ is a solution to (\ref{nls}) on $\mathbb{R}\times\mathbb{T}^3$ such that \begin{equation}\label{nrgboot}\sup_{0\leq t\leq T}\| \mathcal{D} u(t) \|_{H^1} \lesssim \eta,\quad T\leq N^{\gamma},\end{equation} then one has \begin{equation}\label{increnrg2}\big|E[\mathcal{D}u(t)]-E[\mathcal{D}u(0)]\big|\lesssim\eta^2\sum_{K}\min\bigg(1,\frac{K}{N}\bigg)^{1/6}\sup_{|J|\leq K^{\gamma},J\subset[0,T]}\|P_K\mathcal{D}u\|_{S_{K,J}^{4}}^4.\end{equation} uniformly for all $0\leq t\leq T$.
\end{cor}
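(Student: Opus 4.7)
The plan is to derive Corollary \ref{coro} from Proposition \ref{incre} by partitioning the long time interval $[0,T]$ (with $T \leq N^{\gamma}$) into unit subintervals, applying Proposition \ref{incre} on each, and then combining the resulting bounds via H\"older and AM-GM in order to express everything in terms of the long-time Strichartz norm $\|P_K\mathcal{D}u\|_{S^4_{K,J}}$ over short intervals $J$ of length $\leq K^\gamma$.

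The first step is to observe that the bootstrap hypothesis (\ref{nrgboot}) ensures $\|\mathcal{D}u(t_0)\|_{H^1}\lesssim\eta$ at every $t_0\in[0,T]$, so Proposition \ref{incre} applies on each unit subinterval $[m,m+1]\cap[0,T]$ with $\mathcal{D}u(m)$ as the initial data. Summing these increments via the triangle inequality produces the bound
\[
|E[\mathcal{D}u(t)]-E[\mathcal{D}u(0)]|\lesssim\eta^2\sum_{K_0}\min\!\Big(1,\tfrac{K_0}{N}\Big)\sum_{K_1,K_2,K_3\leq K_0}\prod_{j=1}^3\Big(\tfrac{K_j}{K_0}\Big)^{1/12}\sum_m\prod_{j=0}^3 a_{j,m},
\]
with $a_{j,m}:=\sup_{5\leq q'\leq 12}K_j^{5/q'-1/2}\|P_{K_j}\mathcal{D}u\|_{L^{q'}_{t,x}([m,m+1]\cap[0,T])}$.

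The second step is H\"older in $m$ with four equal exponents $4$, which collapses $\sum_m\prod_j a_{j,m}$ into $\prod_j(\sum_m a_{j,m}^4)^{1/4}=\prod_j\|P_{K_j}\mathcal{D}u\|_{S^4_{K_j,[0,T]}}$, matching the definition of the $S^q$ norm introduced in Definition \ref{longtimenorm}. Since $T\leq N^\gamma$, covering $[0,T]$ by at most $\max(1,(N/K_j)^\gamma)$ subintervals of length $K_j^\gamma$ and using that the $S^4$-norm on each is controlled by $B_{K_j}:=\sup_{|J|\leq K_j^\gamma,J\subset[0,T]}\|P_{K_j}\mathcal{D}u\|_{S^4_{K_j,J}}$ gives the conversion
\[
\|P_{K_j}\mathcal{D}u\|_{S^4_{K_j,[0,T]}}\lesssim\max(1,N/K_j)^{\gamma/4}B_{K_j}.
\]

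The third step is to apply AM-GM in the form $\prod_{j=0}^3 X_j\leq \tfrac14\sum_{l=0}^3 X_l^4$ with $X_j=\max(1,N/K_j)^{\gamma/4}B_{K_j}$, and then to evaluate the four-fold geometric sums in $(K_0,K_1,K_2,K_3)$ case by case for each $l$. For $l=0$ the inner sums in $K_1,K_2,K_3$ are geometric and $O(1)$, yielding a clean weight $\min(1,K_0/N)^{1-\gamma}\leq\min(1,K_0/N)^{1/6}$ on $B_{K_0}^4$ since $\gamma<1/6<5/6$. For $l\neq0$ one interchanges the order of summation between $K_l$ and $K_0$; the resulting inner sum
$\sum_{K_0\geq K_l}\min(1,K_0/N)(K_l/K_0)^{1/12}$
is dominated by $K_0\sim\max(K_l,N)$ and decays like $\min(1,K_l/N)^{1/12}$, and combining with the splitting loss $\max(1,N/K_l)^{\gamma}$ determines the exponent of the net weight on $B_{K_l}^4$; the hypothesis $\gamma<1/6$ is used precisely to guarantee that this net exponent is at least $1/6$, matching the desired form $\sum_K\min(1,K/N)^{1/6}B_K^4$.

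The main obstacle will be the careful book-keeping of the four-fold geometric sum in the final step, in particular tracking the interplay between the decay $(K_j/K_0)^{1/12}$ from Proposition \ref{incre}, the large factor $\min(1,K_0/N)$, and the loss $\max(1,N/K_l)^{\gamma/4}$ from the interval splitting; the hypothesis $\gamma<1/6$ enters exactly to keep all exponents in the favorable range.
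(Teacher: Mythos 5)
Your overall plan (apply Proposition \ref{incre} on each unit subinterval, combine over $m$, convert to the short long-time Strichartz norms $B_K:=\sup_{|J|\leq K^\gamma}\|P_K\mathcal{D}u\|_{S^4_{K,J}}$, and finish by AM-GM and geometric summation) matches the paper in spirit, and reversing the order of Step 2 (H\"older in $m$) and Step 3 (AM-GM) relative to the paper is harmless. However, the specific AM-GM split you chose does not deliver the stated exponent $1/6$, and the final sentence of your Step 3 is arithmetically wrong.

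Concretely: you apply $\prod_{j=0}^3 X_j\leq\tfrac14\sum_{l=0}^3 X_l^4$ with $X_j=\max(1,N/K_j)^{\gamma/4}B_{K_j}$, \emph{leaving the decay weight} $\prod_{j=1}^3(K_j/K_0)^{1/12}$ \emph{outside} the AM-GM. Then for $l\neq 0$, after summing the two free small frequencies (which is $O(1)$), you are left with
\[
\sum_{K_0\geq K_l}\min\Big(1,\tfrac{K_0}{N}\Big)\Big(\tfrac{K_l}{K_0}\Big)^{1/12}\ \sim\ \min\Big(1,\tfrac{K_l}{N}\Big)^{1/12},
\]
exactly as you note. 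Combining with the splitting loss $\max(1,N/K_l)^\gamma=\min(1,K_l/N)^{-\gamma}$ gives a net weight $\min(1,K_l/N)^{1/12-\gamma}$ on $B_{K_l}^4$. This exponent is always strictly less than $1/12<1/6$, so for $K_l\ll N$ one has $\min(1,K_l/N)^{1/12-\gamma}\gg\min(1,K_l/N)^{1/6}$; the claimed bound fails, and no choice of $\gamma\geq 0$ (certainly not $\gamma<1/6$) can rescue it. So the assertion ``the hypothesis $\gamma<1/6$ is used precisely to guarantee that this net exponent is at least $1/6$'' is incorrect.

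The fix — which is what the paper actually does — is to use an \emph{asymmetric} AM-GM split: write $b_0=a_0$ and $b_j=(K_j/K_0)^{1/12}a_j$ for $j=1,2,3$, so that $\prod_j b_j\leq\tfrac14\sum_j b_j^4$ places the entire weight $(K_j/K_0)^{1/3}$ on the fourth power $a_j^4$. The geometric sum $\sum_{K_0\geq K_l}\min(1,K_0/N)(K_l/K_0)^{1/3}$ then produces $\min(1,K_l/N)^{1/3}$, and after the $\max(1,N/K_l)^\gamma$ loss the net exponent is $1/3-\gamma$, which is $\geq 1/6$ precisely when $\gamma\leq 1/6$. (The price of this asymmetric split is that the $l=0$ term loses the decay on the three free frequencies, incurring a logarithmic factor $(\log K_0)^3$ in that inner sum; the paper is silent about this, but it is easily absorbed since the $l=0$ term already carries the strong weight $\min(1,K_0/N)$.)

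So the gap is a concrete miscomputation in the geometric-sum bookkeeping, traceable to the choice of how the decay weight $\prod_{j=1}^3(K_j/K_0)^{1/12}$ is distributed among the four factors before taking fourth powers; with the symmetric split you chose, the exponents come out too small. Redo Step 3 with $b_j=(K_j/K_0)^{1/12}a_j$ (or, equivalently, absorb $(K_j/K_0)^{1/12}$ into $X_j$ before the AM-GM), and the proof closes.
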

\begin{proof} Using (\ref{incre}), by AM-GM one has\begin{multline}\label{amgm}\sum_{K_0}\min\bigg(1,\frac{K_0}{N}\bigg)\sum_{K_1,K_3,K_3\lesssim K_0}\prod_{j=1}^3\bigg(\frac{K_j}{K_0}\bigg)^{1/12}\cdot\prod_{j=0}^3\sup_{5\leq q'\leq 12} K_j^{\frac{5}{q'}-\frac{1}{2}}\|P_{K_j}\mathcal{D}u\|_{L_{t,x}^{q'}[0,T]}\\\lesssim\sum_{j=0}^3\sum_{K_0;K_1,K_2,K_3\lesssim K_0}\min\bigg(1,\frac{K_0}{N}\bigg)\cdot\bigg(\frac{K_j}{K_0}\bigg)^{1/3}\bigg(\sup_{5\leq q'\leq 12} K_j^{\frac{5}{q'}-\frac{1}{2}}\|P_{K_j}\mathcal{D}u\|_{L_{t,x}^{q'}[0,T]}\bigg)^4.\end{multline} If $j=0$, the above simplifies to\[\sum_{K_0}\min\bigg(1,\frac{K_0}{N}\bigg)\cdot \bigg(\sup_{5\leq q'\leq 12} K_0^{\frac{5}{q'}-\frac{1}{2}}\|P_{K_0}\mathcal{D}u\|_{L_{t,x}^{q'}[0,T]}\bigg)^4;\] if $1\leq j\leq 3$, say $j=1$, then the above simplifies to\[\sum_{K_1}\min\bigg(1,\frac{K_1}{N}\bigg)^{1/3}\cdot \bigg(\sup_{5\leq q'\leq 12} K_1^{\frac{5}{q'}-\frac{1}{2}}\|P_{K_1}\mathcal{D}u\|_{L_{t,x}^{q'}[0,T]}\bigg)^4.\] In any case, for an interval $[m,m+T]$ with $T\leq1$ such that $\|\mathcal{D}u(m)\|_{H^1}\lesssim\eta$, we have that\[|\text{Inc}|\lesssim\eta^2\sum_{K}\min\bigg(1,\frac{K}{N}\bigg)^{1/3}\bigg(\sup_{5\leq q'\leq 12} K^{\frac{5}{q'}-\frac{1}{2}}\|P_{K}\mathcal{D}u\|_{L_{t,x}^{q'}[m,m+T]}\bigg)^4,\] where $\text{Inc}$ denotes the increment of energy $E[\mathcal{D}u(t)]$ on interval $I$. Now, under the assumption (\ref{nrgboot}), the above estimate holds for any $[m,m+1]\cap[0,T]$, thus we have \[|E[\mathcal{D}u(t)]-E[\mathcal{D}u(0)]|\lesssim\eta^2\sum_{K}\min\bigg(1,\frac{K}{N}\bigg)^{1/3}\sum_{m}\bigg(\sup_{5\leq q'\leq 12} K^{\frac{5}{q'}-\frac{1}{2}}\|P_{K}\mathcal{D}u\|_{L_{t,x}^{q'}([m,m+1]\cap[0,T])}\bigg)^4\] for all $0\leq t\leq T$. Since $T\leq N^\gamma$, By dividing $[0,T]$ into intervals of length $\leq K^{\gamma}$ (there will be $\lesssim\max(1,(N/K)^{\gamma})$ of them) and using Definition \ref{longtimenorm}, we can bound\[\sum_{m}\bigg(\sup_{5\leq q'\leq 12} K^{\frac{5}{q'}-\frac{1}{2}}\|P_{K}\mathcal{D}u\|_{L_{t,x}^{q'}([m-1,m+2]\cap[-5,T+5])}\bigg)^4\lesssim \max\bigg(1,\frac{N}{K}\bigg)^\gamma\cdot\sup_{|J|\leq K^{\gamma},J\subset[0,T]}\|P_K\mathcal{D}u\|_{S_{K,J}^{4}}^4.\] Summing over $K$ and using that $\gamma<1/6$, this implies (\ref{increnrg2}).
\end{proof}
\subsection{Long time Strichartz bounds: control of nonlinearity}\label{longbd} In this section we prove the following 
\begin{prop}\label{nonlctrl} Let $\gamma=1/300$. Let $u$ be a solution to (\ref{nls}) on $\mathbb{R}\times\mathbb{T}^3$ such that \begin{equation}\label{nrgboot2}\sup_{t\in[0,T]}\| \mathcal{D} u(t) \|_{H^1} \lesssim \eta,\end{equation} with $T\leq N^\gamma$, and define \begin{equation}\label{nonlinear}A_K=A_K(T):=\sup_{|J|\leq K^{\gamma},J\subset[0,T]}\|P_K\mathcal{D}u\|_{S_{K,J}^{7/2}},\end{equation} then, if $A_K\lesssim 1$ for any $K$, then we have \begin{equation}\label{nonlinear2}A_K\lesssim\eta,\quad \sum_{K\geq N}A_K^2\lesssim\eta^2.\end{equation}
\end{prop}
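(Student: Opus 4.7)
The argument takes the assumption $A_K \lesssim 1$ as its \emph{a priori} control and upgrades it to $A_K \lesssim \eta$ via a quintilinear estimate built from Duhamel's formula, Proposition~\ref{longstr} and Proposition~\ref{trilinear}. Fix $K$ and any interval $J \subset [0, T]$ with $|J| \leq K^\gamma$, and let $t_0 = \inf J$. Duhamel splits
\[
P_K \mathcal D u(t) = e^{i(t - t_0) \Delta} P_K \mathcal D u(t_0) - i \int_{t_0}^t e^{i(t - t') \Delta} P_K \mathcal D(|u|^4 u)(t') \, dt'
\]
into a free evolution and a Duhamel term. Since $\gamma = 1/300 < 1/10$, Proposition~\ref{longstr} with $\nu = \gamma$ controls the first piece:
\[
\|e^{i(t - t_0) \Delta} P_K \mathcal D u(t_0)\|_{S^{7/2}_{K, J}} \lesssim K \|P_K \mathcal D u(t_0)\|_{L^2} \lesssim \|P_K \mathcal D u(t_0)\|_{H^1},
\]
which by (\ref{nrgboot2}) is $\lesssim \eta$ uniformly in $K$ and is $\ell^2(K)$-summable with sum $\lesssim \eta^2$.

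The substantive content is bounding the Duhamel integral in $S^{7/2}_{K, J}$. My plan is to derive an inhomogeneous variant of Proposition~\ref{longstr} (e.g.\ via Christ--Kiselev, as in the proof of (\ref{inhom})), then Littlewood--Paley decompose $|u|^4 u = \sum \widetilde u_{K_0} \widetilde u_{K_1} \widetilde u_{K_2} \widetilde u_{K_3} \widetilde u_{K_4}$ with $K_0 \geq K_1 \geq \cdots \geq K_4$, the Fourier support of $P_K$ forcing $K_0 \gtrsim K$. For each quintilinear piece I would partition $J$ into $\sim (K / K_4)^\gamma$ sub-intervals of length $\leq K_4^\gamma$, on each of which the hypothesis $A_{K_j} \lesssim 1$ controls every factor in its natural long-time norm. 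On each sub-interval Proposition~\ref{trilinear} applied to three of the factors supplies a gain $(K_4/K_0)^{1/16}$, while the remaining two factors are estimated via (\ref{std2}). The critical point is that the $(K/K_4)^\gamma$ loss from summing sub-intervals is overwhelmed by the trilinear gain $(K_4/K_0)^{1/16}$ (together with $K_0 \gtrsim K$); this competition, after further Strichartz-exponent losses are tallied, is what fixes $\gamma = 1/300$.

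In the extreme low-frequency regime, roughly $K_4 \ll K_0^{1/100}$, the direct trilinear gain becomes too weak; following the outline in the introduction I would there replace Proposition~\ref{trilinear} by $X^{s, b}$ estimates via (\ref{strxsb}) and (\ref{xsb1}), using the local-in-time $X^1$ control from Proposition~\ref{locale} to absorb the essentially time-non-resonant interaction. Summing all contributions produces a schematic inequality
\[
A_K \; \lesssim \; \eta \; + \; \eta^4 A_K \; + \; \sum_{K'} (K'/K)^{c} \cdot (\text{products of } A_{K'} \text{ and } \eta)
\]
for some $c > 0$, from which the uniform bound $A_K \lesssim \eta$ follows by absorbing the $\eta^4 A_K$ term using smallness of $\eta$, and $\sum_{K \geq N} A_K^2 \lesssim \eta^2$ follows by squaring and $\ell^2$-summation in $K$. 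The main obstacle is the combinatorial book-keeping in the quintilinear estimate: one must verify, across every frequency configuration and every choice of which three factors feed Proposition~\ref{trilinear}, that the exponent arithmetic leaves a strict negative power of the ratio of the outermost frequencies after all sub-interval and Strichartz losses, so that the Littlewood--Paley sums converge without any logarithmic blow-up.
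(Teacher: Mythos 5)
Your plan correctly identifies the broad architecture — Duhamel on $J$, Proposition~\ref{longstr} for the free piece, a quintilinear dyadic decomposition split into a regime where the trilinear estimate of Proposition~\ref{trilinear} supplies a frequency gain and a very-low-frequency regime handled by $X^{s,b}$/modulation analysis, followed by a self-consistent inequality in $A_K$. This matches the paper's skeleton. However there is one genuinely missing idea that the plan cannot do without, and it is fatal as written.

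In the very-low-frequency regime $K_2,\dots,K_5 \ll K^{1/100}$ with $K_1 \sim K$, the quadrilinear factor $\overline{P_{K_2}u}\,P_{K_3}u\,\overline{P_{K_4}u}\,P_{K_5}u$ has a \emph{zero spatial frequency component}. The corresponding contribution to the Duhamel integrand is $\omega(t)\cdot P_K\mathcal{D}u$, with $\omega(t)=3\,\mathbb{P}_0(\cdots)$ a real scalar of size $\sim\eta^4$. This is not ``essentially time-non-resonant'': the modulation gain you invoke through $X^{s,b}$ estimates and resonance analysis is identically zero for the $k=0$ mode, and the thin-slice improvement (\ref{str2}) is also vacuous (the set $\mathcal{C}$ degenerates to the full annulus when $k=0$). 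Directly Duhamel-bounding $\int_{t_0}^t e^{i(t-t')\Delta}\omega(t')P_K\mathcal{D}u(t')\,dt'$ accumulates a factor $\eta^4\cdot K^{\gamma}$, which cannot be absorbed. The paper handles this with a gauge transformation: setting $v(t)=e^{-i\Omega(t)}P_K\mathcal{D}u(t)$ with $\Omega(t)=\int_0^t\omega$ removes the $\omega\cdot P_K\mathcal{D}u$ term from the equation, and since $\omega$ is real, $e^{-i\Omega}$ is a unimodular phase that preserves every Lebesgue (and hence $S^{q}_{K,J}$) norm. Correspondingly, the residual low-frequency nonlinearity $\mathcal{N}_4$ carries a $\mathbb{P}_{\neq 0}$ projection, which is exactly what makes the subsequent $X^{s,b}$/resonance step close. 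Without the gauge, your $X^{s,b}$ treatment does not cover the full term and the argument breaks at long times.

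Two smaller remarks. First, the closing step is not a straightforward absorption: the self-consistent inequality couples $A_K$ across all dyadic $K$ through an off-diagonal kernel, and the paper invokes a discrete acausal Gronwall inequality (Lemma~\ref{aux}) to decouple it into $A_K\lesssim\sup_M \min(K/M,M/K)^{c}\,c_M$; your ``absorb the $\eta^4 A_K$ term'' statement handles only the diagonal. Second, when you translate $J$ to start at $t_0=\inf J$ for $K<N$, the free-evolution contribution involves $u(t_0)$ rather than $u(0)$; this is fine because $K\|P_K\mathcal D u(t_0)\|_{L^2}\lesssim\eta$ from (\ref{nrgboot2}), but it should be said (cf. Remark~\ref{transl}).
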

Before starting the proof, we first recall a ``discrete acausal Gronwall inequality''. For a proof see \cite{Tao}, Theorem 1.19 and Corollary 1.20.
\begin{lem}\label{aux} Suppose $\eta\ll 1$, $\{b_K\}$ and $\{c_K\}$ are two positive sequences satisfying \[b_K\lesssim c_K+\eta\sum_M\min\bigg(\frac{K}{M},\frac{M}{K}\bigg)^{1/5000}b_M,\] and \[\sup_K \frac{b_K}{K^{1/6000}}<\infty,\] then we have\[b_K\lesssim\sup_{M}\min\bigg(\frac{K}{M},\frac{M}{K}\bigg)^{1/6000}c_M\] uniformly for all $K$.
\end{lem}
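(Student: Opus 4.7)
The plan is to derive a bootstrap inequality for $A_K$ of the form required by Lemma \ref{aux},
\[A_K \lesssim c_K + \eta \sum_M \min(K/M, M/K)^{1/5000}\, A_M,\]
where $c_K := K\|P_K \mathcal{D} u(t_0)\|_{L^2}$ for any fixed $t_0 \in [0,T]$, and then invoke the lemma. By Bernstein and the hypothesis $\|\mathcal{D} u(t_0)\|_{H^1}\lesssim\eta$, one has $\sum_K c_K^2 \lesssim \eta^2$; the square summability $\sum_{K\geq N} A_K^2 \lesssim \eta^2$ will then follow from Schur's test applied to the kernel $\min(K/M,M/K)^{1/6000}$ supplied by Lemma \ref{aux}.

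Fix a subinterval $J \subset [0,T]$ with $|J| \leq K^\gamma$ and a base point $t_0 \in J$. The Duhamel formula reads
\[P_K \mathcal{D} u(t) = e^{i(t-t_0)\Delta} P_K \mathcal{D} u(t_0) - i \int_{t_0}^{t} e^{i(t-t')\Delta} P_K \mathcal{D}(|u|^4 u)(t')\,\mathrm{d}t'.\]
Since $\gamma = 1/300 < 1/10$, Proposition \ref{longstr} bounds the first term in $S^{7/2}_{K,J}$ by $c_K$. For the Duhamel term, I decompose $|u|^4 u = \sum \prod_{j=1}^{5} P_{M_j}\widetilde u$ with $M_1 \geq \cdots \geq M_5$ (and necessarily $M_1 \gtrsim K$), and estimate each configuration via: (i) the inhomogeneous Strichartz estimate of Proposition \ref{linear} on each unit subinterval $I \subset J$, which trades the time integral for an $L^{q_2}_{t,x}(I)$ norm on the product; (ii) H\"{o}lder in space-time, splitting into $L^{q'_j}_{t,x}(I)$ norms of each $P_{M_j}\mathcal{D} u$; and (iii) taking $\ell^{7/2}$ over unit intervals $I$, which, after reconciling the time-scales $K^\gamma$ versus $M_j^\gamma$ (losing at most $(K/M_j)^{O(\gamma)}$ when $M_j < K$, and symmetrically when $M_j > K$), produces a factor of $A_{M_j}$.

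The crucial gains come from two sources. In the dominant regime where the three smallest frequencies $M_3, M_4, M_5$ are not all below $K^{1/100}$, apply the trilinear Strichartz inequality (Proposition \ref{trilinear}) to the triple consisting of $P_{M_1}\mathcal{D} u$ together with two lower factors: this yields a factor $(\min/\max)^{1/16}$ in frequency ratio, which after absorbing $O(\gamma)$ time-scale losses (noting $16\gamma \ll 1$) produces the required power $\min(K/M, M/K)^{1/5000}$, the remaining factors being bounded by $\|\mathcal{D} u\|_{X^1} \lesssim \eta$. In the complementary nearly non-resonant regime where all three of $M_3, M_4, M_5$ lie below $K^{1/100}$, the output phase at frequency $K$ is essentially undisturbed (of size $\sim K^2$); here I use the $X^{s,b}$ Strichartz (\ref{strxsb}) together with the embedding (\ref{xsb1}) on unit intervals to control the three low factors, gaining a power $\sim K^{-1/100}$, which since $M_1 \gtrsim K$ comfortably dominates $\min(K/M_1, M_1/K)^{1/5000}$. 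Combining both regimes yields the announced bootstrap inequality.

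Finally, the hypothesis $A_K \lesssim 1$ immediately supplies the a priori growth condition $\sup_K A_K/K^{1/6000} < \infty$ needed for Lemma \ref{aux}. The lemma then gives $A_K \lesssim \sup_M \min(K/M, M/K)^{1/6000}\, c_M \lesssim \eta$, and Schur's test applied to the same kernel upgrades this to $\sum_K A_K^2 \lesssim \sum_M c_M^2 \lesssim \eta^2$, implying in particular $\sum_{K\geq N} A_K^2 \lesssim \eta^2$. The main obstacle is the nonlinear estimate: one must control a quintic interaction on a long window of length $K^\gamma$ while bootstrapping only against $A_M$-norms on possibly shorter windows of length $M^\gamma$, with no room for loss in the critical setting. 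The exponent $\gamma = 1/300$ is tuned precisely small enough that the trilinear Strichartz gain $1/16$, combined with the $X^{s,b}$ gain $\sim 1/100$ from near-nonresonance, dominates all accumulated time-scale and logarithmic losses.
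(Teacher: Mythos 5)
Your proposal does not address the statement at all. Lemma \ref{aux} is a self-contained, purely combinatorial discrete Gronwall-type inequality for two abstract positive sequences $\{b_K\}$ and $\{c_K\}$ indexed by dyadic integers; its statement and proof involve no PDE, no $A_K$, no Duhamel formula, no Strichartz estimate. What you have sketched instead is the proof of Proposition \ref{nonlctrl}, which is where Lemma \ref{aux} is \emph{applied}: you yourself write ``derive a bootstrap inequality for $A_K$ $\dots$ and then invoke the lemma,'' treating Lemma \ref{aux} as a black box. So the proposal simply argues for a different result. (Note also that the paper itself does not reprove this lemma; it refers to Tao, \emph{Nonlinear dispersive equations}, Theorem 1.19 and Corollary 1.20.)

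If you do want to prove the lemma, the key ingredients are (i) the gap between the exponents $1/5000$ and $1/6000$, and (ii) the a priori growth bound on $b_K$, which together let an iteration close. Concretely, set $L(K,M)=\min(K/M,M/K)^{1/6000}$, $d_K=\sup_M L(K,M)c_M$, and $b_K^*=\sup_M L(K,M)b_M$. The growth hypothesis $b_M\lesssim M^{1/6000}$ forces $b_K^*<\infty$ for each $K$, and $L$ is submultiplicative, $L(K,M)L(M,M')\leq L(K,M')$ (triangle inequality in $\log$). Substituting $b_M\leq b_K^*/L(K,M)$ into the hypothesized inequality and observing that $\sum_M\min(K/M,M/K)^{1/5000-1/6000}=\sum_M\min(K/M,M/K)^{1/30000}$ converges to a constant $C_1$ independent of $K$ (this is precisely where the exponent gap matters), one obtains $b_K\lesssim c_K+C_1\eta\,b_K^*$. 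Applying $\sup_M L(K',M)(\cdot)$ to both sides and using submultiplicativity gives $b_{K'}^*\lesssim d_{K'}+C_1\eta\,b_{K'}^*$; since $b_{K'}^*$ is finite and $\eta$ is small, the last term can be absorbed, and $b_K\leq b_K^*\lesssim d_K$ follows.
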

\begin{proof}[Proof of Proposition \ref{nonlctrl}] Let $u(0)=f$ and \[a_K=\left\{\begin{split}&K\|P_K\mathcal{D}f\|_{L^2},&K\geq N,\\
&\eta,&K< N\end{split}\right.,\] we will first prove that
\begin{equation}\label{double}A_K\lesssim a_K+\eta^5 K^{-1/2000}+\eta\sum_{M}\min\bigg(\frac{K}{M},\frac{M}{K}\bigg)^{1/5000} A_M\end{equation} for any $K$. 

Notice that (\ref{nrgboot2}) implies\begin{equation}\label{localxs}\|\mathcal{D}u\|_{X^1(I)}\lesssim\eta\end{equation} for any interval $I\subset[0,T]$ with $|I|\lesssim1$. To bound $A_K$, by definition, we choose an interval $J\subset[0,T]$ with $|J|\leq K^{\gamma}$. Since $T\leq N^\gamma$, if $K\geq N$ we can assume $J=[0,T]$; if $K< N$, by translation, we will also assume that the left endpoint of $J$ is $0$ (see Remark \ref{transl} after the proof).

Now that $J=[0,T']$ with $T'\leq T\leq N^\gamma$, we start with the evolution equation satisfied by $P_K\mathcal{D}u$, namely\[(i\partial_t+\Delta)P_K\mathcal{D}u=P_K\mathcal{D}(|u|^4u).\] By Littlewood-Paley decomposition, we have \begin{align}(i\partial_t+\Delta)P_K\mathcal{D}u&=\sum_{K_1,\cdots, K_5} P_K\mathcal{D}(P_{K_1}u\cdot \overline{P_{K_2}u}\cdot P_{K_3}u\cdot\overline{P_{K_4}u}\cdot P_{K_5}u)\nonumber\\
\label{term1}&=\sum_{\text{at least two $K_j\gtrsim K$}} P_K\mathcal{D}(P_{K_1}u\cdot \overline{P_{K_2}u}\cdot P_{K_3}u\cdot\overline{P_{K_4}u}\cdot P_{K_5}u)\\
\label{term2}&+\sum_{j=1}^5\sum_{\substack{K_j\sim K\\
K_l\ll K(l\neq j)}}P_K\mathcal{D}(P_{K_1}u\cdot \overline{P_{K_2}u}\cdot P_{K_3}u\cdot\overline{P_{K_4}u}\cdot P_{K_5}u).\end{align}Next, we fix $\alpha_1=1/100$, denote $(\ref{term1})=\mathcal{N}_1$, and further decompose (\ref{term2}) using symmetry, as follows:
\begin{align}(\ref{term2})&=\bigg\{2\sum_{\substack{K_2\sim K\\
K^{\alpha_1}\lesssim\max_{l\neq 2}K_l\ll K}}P_K\mathcal{D}(P_{K_1}u\cdot \overline{P_{K_2}u}\cdot P_{K_3}u\cdot\overline{P_{K_4}u}\cdot P_{K_5}u)\nonumber\\
\label{term3}&\qquad\!\!+3\sum_{\substack{K_1\sim K\\
K^{\alpha_1}\lesssim\max_{l\neq 1}K_l\ll K}}P_K\mathcal{D}(P_{K_1}u\cdot \overline{P_{K_2}u}\cdot P_{K_3}u\cdot\overline{P_{K_4}u}\cdot P_{K_5}u)\bigg\}\\
\label{term3+}&+2\sum_{\substack{K_2\sim K\\
\max_{l\neq 2}K_l\ll K^{\alpha_1}}}P_K\mathcal{D}(P_{K_1}u\cdot \overline{P_{K_2}u}\cdot P_{K_3}u\cdot\overline{P_{K_4}u}\cdot P_{K_5}u)\\
\label{term4}&+3\sum_{\substack{K_1\sim K\\
\max_{l\neq 1}K_l\ll K^{\alpha_1}}}P_K\mathcal{D}(P_{K_1}u\cdot \mathbb{P}_{\neq 0}(\overline{P_{K_2}u}\cdot P_{K_3}u\cdot\overline{P_{K_4}u}\cdot P_{K_5}u))\\
\label{term6}&+3P_K\mathcal{D}u\cdot\mathbb{P}_0\bigg(\sum_{K_2,\cdots,K_5\ll K^{\alpha_1}}\overline{P_{K_2}u}\cdot P_{K_3}u\cdot\overline{P_{K_4}u}\cdot P_{K_5}u\bigg).
\end{align} We denote \[(\ref{term3})=\mathcal{N}_2,\quad (\ref{term3+})=\mathcal{N}_3,\quad (\ref{term4})=\mathcal{N}_4,\] and \[3\mathbb{P}_0\bigg(\sum_{K_2,\cdots,K_5\ll K^{\alpha_1}}\overline{P_{K_2}u}\cdot P_{K_3}u\cdot\overline{P_{K_4}u}\cdot P_{K_5}u\bigg)=\omega(t).\] Then we have
\begin{equation}\label{neweqn}(\partial_t+i\Delta)P_K\mathcal{D}u=\mathcal{N}_1+\mathcal{N}_2+\mathcal{N}_3+\mathcal{N}_4+\omega(t)\cdot P_K\mathcal{D}u.\end{equation} Define \begin{equation}\label{defv}v(t)=e^{-i\Omega(t)}P_K\mathcal{D}u(t),\quad \Omega(t)=\int_0^t\omega(t')\,\mathrm{d}t',\end{equation} then we have\[(\partial_t+i\Delta)v=\mathcal{N}_1'+\mathcal{N}_2'+\mathcal{N}_3'+\mathcal{N}_4',\] where $\mathcal{N}_j':=e^{-i\Omega(t)}\mathcal{N}_j$, and thus by Duhamel's formula
\begin{equation}\label{newduham}v(t)=e^{it\Delta}P_K\mathcal{D}f-i\int_0^t e^{i(t-t')\Delta}\big(\mathcal{N}_1'(t')+\mathcal{N}_2'(t')+\mathcal{N}_3'(t')+\mathcal{N}_4'(t')\big)\,\mathrm{d}t'.\end{equation} For $0\leq t\leq T'$, write \[v(t)=v_{\mathrm{lin}}(t)-i\sum_{0\leq m\leq T'}(v_m'+v_m'')(t),\] where $v_{\mathrm{lin}}(t)=e^{it\Delta}P_K\mathcal{D}f$, and \[v_m'(t)=\mathbf{1}_{[m,m+1)}(t)\int_m^t e^{i(t-t')\Delta}\big(\mathcal{N}_1'(t')+\mathcal{N}_2'(t')+\mathcal{N}_3'(t')+\mathcal{N}_4'(t')\big)\,\mathrm{d}t',\] and \[v_m''(t)=\mathbf{1}_{[m+1,+\infty)\cap J}(t)\cdot e^{it\Delta}g_m,\] where \[g_m=\int_m^{m+1}e^{-it'\Delta}\big(\mathcal{N}_1'(t')+\mathcal{N}_2'(t')+\mathcal{N}_3'(t')+\mathcal{N}_4'(t')\big)\,\mathrm{d}t'.\] Note that, when $[m,m+1]\not\subset[0,T']$ (this can happen for at most two $m$), we should replace the interval $[m,m+1]$ by $[m,m+1]\cap [0,T']$. By Proposition \ref{longstr}, we know that \[\|v_{\mathrm{lin}}\|_{S_{K,J}^{7/2}}\lesssim K\|P_K\mathcal{D}f\|_{L^2}\lesssim a_K;\] next let us estimate $v_m'$ and $v_m''$ for each fixed $m$. We will consider the contributions from the terms $\mathcal{N}_j'$ separately; when $j$ is fixed, we will denote by $\mathcal{L}_{K_1,\cdots,K_5}$ the contribution to $\mathcal{N}_j$ from the choice of $(K_1,\cdots,K_5)$, and define $\mathcal{L}_{K_1,\cdots,K_5}'$, $v_{m,K_1,\cdots,K_5}'$,$v_{m,K_1,\cdots,K_5}''$ and $g_{m,K_1,\cdots,K_5}$ accordingly. Since these functions are determined by the value of $u$ on $[m,m+1]$, and $\|\mathcal{D}u\|_{X^1[m-1,m+2]}\lesssim\eta$, by choosing an extension of $u$, in considering these terms we may assume $\|\mathcal{D}u\|_{X^1}\lesssim \eta$.

(1) The term $\mathcal{N}_1'$. We may assume $K_1\geq\cdots\geq K_5$, and $K\lesssim K_2$. By Proposition \ref{longstr} we know\[\|v_{m,K_1,\cdots,K_5}''\|_{S_{K,J}^{7/2}}\lesssim K\|g_{m,K_1,\cdots,K_5}\|_{L^2},\] and by dual Strichartz we know\[K\|g_{m,K_1,\cdots,K_5}\|_{L^2}\lesssim K^{13/12}\|\mathcal{L}_{K_1,\cdots,K_5}'\|_{L_{t,x}^{60/43}[m,m+1]}= K^{13/12}\|\mathcal{L}_{K_1,\cdots,K_5}\|_{L_{t,x}^{60/43}[m,m+1]},\] and by (\ref{inhom}) we know \begin{multline}\|v_{m,K_1,\cdots,K_5}'\|_{S_{K,J}^{7/2}}\lesssim\sup_{5\leq q\leq 12}K^{\frac{5}{q}-\frac{1}{2}}\|v_{m,K_1,\cdots,K_5}'\|_{L_{t,x}^q[m,m+1]}\\\lesssim K^{13/12}\|\mathcal{L}_{K_1,\cdots,K_5}'\|_{L_{t,x}^{60/43}[m,m+1]}= K^{13/12}\|\mathcal{L}_{K_1,\cdots,K_5}\|_{L_{t,x}^{60/43}[m,m+1]}.\end{multline} Moreover by H\"{o}lder and (\ref{leibniz}) we have\[\begin{split}K^{13/12}\|\mathcal{L}_{K_1,\cdots,K_5}\|_{L_{t,x}^{60/43}[m,m+1]}
&\lesssim K^{13/12}K_1^{-5/12}(K_3K_4K_5)^{1/12}\cdot K_1^{5/12}\|P_{K_1}\mathcal{D}\widetilde{u}\|_{L_{t,x}^{60/11}[m,m+1]}\\&\times \|P_{K_2}\mathcal{D}\widetilde{u}\|_{L_{t,x}^{60/17}[m,m+1]}\cdot\prod_{j=3}^5K_j^{-1/12}\|P_{K_j}\mathcal{D}\widetilde{u}\|_{L_{t,x}^{12}[m,m+1]}.\end{split}\] By (\ref{std2}) we have \[\|P_{K_2}\mathcal{D}\widetilde{u}\|_{L_{t,x}^{60/17}[m,m+1]}\lesssim\eta K_2^{1/12}\|P_{K_2}\mathcal{D}u\|_{X^0}\lesssim \eta K_2^{-11/12},\] thus summing over $K_2\gtrsim K$ we get \[\begin{split}K^{13/12}\sum_{K_2}\|\mathcal{L}_{K_1,\cdots,K_5}\|_{L_{t,x}^{60/43}[m,m+1]}
&\lesssim \eta K^{1/6}K_1^{-5/12}(K_3K_4K_5)^{1/12}\\&\times K_1^{5/12}\|P_{K_1}\mathcal{D}\widetilde{u}\|_{L_{t,x}^{60/11}[m,m+1]}\cdot\prod_{j=3}^5K_j^{-1/12}\|P_{K_j}\mathcal{D}\widetilde{u}\|_{L_{t,x}^{12}[m,m+1]}.\end{split}\] Notice that the coefficient\[K^{1/6}K_1^{-5/12}(K_3K_4K_5)^{1/12}\lesssim\bigg(\frac{K}{K_1}\bigg)^{1/48}\cdot\prod_{j=3}^5\min\bigg(\frac{K}{K_j},\frac{K_j}{K}\bigg)^{1/48}\] under the assumption $K\lesssim K_1$ and $K_j\lesssim K_1$, summing over $(K_1,K_3,K_4,K_5)$ and using AM-GM as in (\ref{amgm}), we concluse that\begin{equation}\label{term1}\|v_m'\|_{S_{K,J}^{7/2}}+\|v_m''\|_{S_{K,J}^{7/2}}\lesssim\eta\sum_{M}\min\bigg(\frac{M}{K},\frac{K}{M}\bigg)^{1/12}\sup_{5\leq q\leq 12}M^{\frac{5}{q}-\frac{1}{2}}\|P_M\mathcal{D}u\|_{L_{t,x}^q[m,m+1]}^4.\end{equation}

(2) The term $\mathcal{N}_2'$. By Proposition \ref{longstr} and (\ref{std2}) we have that\begin{equation}\label{gain1}\|v_{m,K_1,\cdots,K_5}'\|_{S_{K,J}^{7/2}}\lesssim\sup_{5\leq q\leq 12}K^{\frac{5}{q}-\frac{1}{2}}\|v_{m,K_1,\cdots,K_5}'\|_{L_{t,x}^q[m,m+1]}\lesssim\|\mathcal{I}(\mathcal{L}_{K_1,\cdots,K_5}\cdot e^{-i\Omega(t)})\|_{X^1[m,m+1)}\end{equation} and \begin{equation}\label{gain2}\|v_{m,K_1,\cdots,K_5}''\|_{S_{K,J}^{7/2}}\lesssim K\|g_{m,K_1,\cdots,K_5}\|_{L^2}\lesssim \|\mathcal{I}(\mathcal{L}_{K_1,\cdots,K_5}\cdot e^{-i\Omega(t)})\|_{X^1[m,m+1)},\end{equation} where $\mathcal{I}$ is the Duhamel operator\[\mathcal{I}G(t)=\int_m^t e^{i(t-t')\Delta}G(t')\,\mathrm{d}t'.\] Since we will not distinguish between $u$ and $\overline{u}$, we will only consider the terms\[\sum_{\substack{K_1\sim K\\
K^{\alpha_1}\lesssim K_2\ll K}}\mathcal{L}_{K_1,\cdots,K_5}\]from $\mathcal{N}_2$, and assume $K_2\geq\cdots\geq K_5$. By (\ref{duality}) we have that\[\|\mathcal{I}(\mathcal{L}_{K_1,\cdots,K_5}\cdot e^{-i\Omega(t)})\|_{X^1[m,m+1)}\lesssim K\int_m^{m+1}\int_{\mathbb{T}^3}e^{-i\Omega(t)}\mathcal{L}_{K_1,\cdots,K_5}\cdot\overline{v}\,\mathrm{d}x\mathrm{d}t,\] where $v$ is some function such that $\|v\|_{Y^0[m,m+1)}=1$. By choosing an extension we may assume $\|v\|_{Y^0}\lesssim 1$; we could move the factor $P_K\mathcal{D}$ in the expression of $\mathcal{L}_{K_1,\cdots,K_5}$ to $v$, and then use Proposition \ref{trilinear} to bound\[\begin{split}\|\mathcal{I}(\mathcal{L}_{K_1,\cdots,K_5}\cdot e^{-i\Omega(t)})\|_{X^1[m,m+1)}&\lesssim K\|e^{-i\Omega(t)}\|_{L_{t,x}^{\infty}}\|P_{K_1}\widetilde{u}P_{K_3}\widetilde{u}P_{K_4}\widetilde{u}\|_{L^2[m,m+1)}\cdot\|P_{K}\mathcal{D}\widetilde{v}P_{K_5}\widetilde{u}P_{K_5}\widetilde{u}\|_{L^2[m,m+1)}\\
&\lesssim \bigg(\frac{K_5}{K}+\frac{1}{K_2}\bigg)^{1/16}K\|P_{K_1}u\|_{Y^0}\|P_K\mathcal{D}v\|_{Y^0}\cdot\prod_{j=2}^5 K_j\|P_{K_j}u\|_{Y^0}.\end{split}\] Now since \[K_j\|P_{K_j}u\|_{Y^0}\lesssim \|P_{K_j}u\|_{Y^1}\lesssim \|P_{K_j}u\|_{X^1}\lesssim\|\mathcal{D}u\|_{X^1}\lesssim\eta\] for $2\leq j\leq 5$, and similarly \[K\|P_{K_1}u\|_{Y^0}\|P_K\mathcal{D}v\|_{Y^0}\lesssim K\max(1,(K/N)^{s-1}) \|P_{K_1}u\|_{Y^0}\|P_Kv\|_{Y^0}\lesssim K_1 \|P_{K_1}\mathcal{D}u\|_{Y^0}\|P_Kv\|_{Y^0}\lesssim\eta,\] this gives that
\begin{equation}\label{gain3}\|\mathcal{I}(\mathcal{L}_{K_1,\cdots,K_5}\cdot e^{-i\Omega(t)})\|_{X^1[m,m+1)}\lesssim\eta^5 \bigg(\frac{K_5}{K}+\frac{1}{K_2}\bigg)^{1/16}\lesssim\eta^5K^{-\alpha_1/16}+\eta^5\bigg(\frac{K_5}{K}\bigg)^{1/16}.\end{equation} On the other hand, choose \[\frac{1}{q_0}=\frac{7}{10}+\delta,\quad \frac{1}{q_1}=\frac{3}{10}+3\delta,\quad \frac{1}{q_2}=\frac{1}{10}+\delta\] with $\delta=\frac{1}{6000}$, by (\ref{inhom}) and dual Strichartz we have the (tame) estimate\[\sup_{5\leq q\leq 12}K^{\frac{5}{q}-\frac{1}{2}}\|v_{m,K_1,\cdots,K_5}'\|_{L_{t,x}^q[m,m+1]}+K\|g_{m,K_1,\cdots,K_5}\|_{L^2}\lesssim K^{1+5\delta}\|\mathcal{L}_{K_1,\cdots,k_5}\|_{L_{t,x}^{q_0}[m,m+1]},\] which by H\"{o}lder, (\ref{leibniz}) and (\ref{std2}), is bounded by\[\begin{split}K^{1+5\delta}\|\mathcal{L}_{K_1,\cdots,k_5}\|_{L_{t,x}^{q_0}[m,m+1]}&\lesssim K^{1+5\delta}\|P_{K_1}\widetilde{u}\|_{L_{t,x}^{q_1}[m,m+1]}\cdot\prod_{j=2}^5\|P_{K_j}\widetilde{u}\|_{L_{t,x}^{q_2}}\\
&\lesssim \eta K^{1+5\delta}K_1^{15\delta-1}(K_2\cdots K_5)^{-5\delta}\prod_{j=2}^5\sup_{5\leq q\leq 12}K_j^{\frac{5}{q}-\frac{1}{2}}\|P_{K_j}\widetilde{u}\|_{L_{t,x}^{q_2}[m,m+1]}\\
&\lesssim \eta\bigg(\frac{K}{K_5}\bigg)^{20\delta}\prod_{j=2}^5\sup_{5\leq q\leq 12}K_j^{\frac{5}{q}-\frac{1}{2}}\|P_{K_j}\widetilde{u}\|_{L_{t,x}^{q_2}[m,m+1]}.\end{split}\] Interpolating this with (\ref{gain3}) and using also (\ref{gain1}) and (\ref{gain2}), we get that\[\|v_{m,K_1,\cdots,K_5}'\|_{S_{K,J}^{7/2}}\lesssim\eta^5K^{-\alpha_1/16}+\eta\bigg(\frac{K_5}{K}\bigg)^{1/256}\prod _{j=2}^5\sup_{5\leq q\leq 12}K_j^{\frac{5}{q}-\frac{1}{2}}\|P_{K_j}\widetilde{u}\|_{L_{t,x}^{q_2}[m,m+1]}^{7/8},\] and the same for $v_{m,K_1,\cdots,K_5}''$. Summing over $K_2,\cdots,K_5\lesssim K$ and using AM-GM as in case (1) above, we get that \begin{equation}\label{term2}\|v_m'\|_{S_{K,J}^{7/2}}+\|v_m''\|_{S_{K,J}^{7/2}}\lesssim\eta^5K^{-\alpha_1/20}+\eta\sum_{M}\min\bigg(\frac{M}{K},\frac{K}{M}\bigg)^{1/256}\sup_{5\leq q\leq 12}M^{\frac{5}{q}-\frac{1}{2}}\|P_M\mathcal{D}u\|_{L_{t,x}^q[m,m+1]}^{7/2}.\end{equation}

(3) Terms $\mathcal{N}_3'$ and $\mathcal{N}_4'$. These terms will be estimated using traditional $X^{s,b}$ spaces, combined with resonance analysis and the improved estimate (\ref{str2}) for thin slices. Note that, in terms of resonance $\mathcal{N}_3$ is better then $\mathcal{N}_4$ due to the choice of $u$ and $\overline{u}$, since this makes the resonance factor $Q(k)+Q(k_1)+\cdots$ instead of $Q(k)-Q(k_1)+\cdots$ where $k$ and $k_1$ are the top two frequencies. Thus we will focus on $\mathcal{N}_4'$. We will assume $K_2\geq\cdots \geq K_5$; by (\ref{xsb1}), we have that $\|\mathcal{D}u\|_{X^{1,1/4-\varepsilon}}\lesssim\eta$ for any fixed $\varepsilon>0$.

We shall prove that \[\|v_{m,K_1,\cdots,K_5}'\|_{S_{K,J}^{7/2}}+\|v_{m,K_1,\cdots,K_5}''\|_{S_{K,J}^{7/2}}\lesssim \eta^5K^{-1/1000}\] for any $(K_1,\cdots,K_5)$, so that the logarithmic factor coming from summation in $(K_1,\cdots,K_5)$ can be omitted. Consider the function \[\rho(t,x)=\mathbb{P}_{\neq 0}\bigg(\prod_{j=2}^5P_{K_j}\widetilde{u}\bigg).\] Note that $K_2\ll K^{\alpha_1}$, we use the equation (\ref{nls}) to compute, for any interval $I$ containing $m$ with $|I|\lesssim 1$, that \[\begin{split}\|\partial_t\rho\|_{L_{t,x}^2(I)}&\lesssim\|P_{\leq K^{\alpha_1}}u\|_{L_{t,x}^{\infty}}^3\cdot\bigg(K^{2\alpha_1}\|P_{\leq K^{\alpha_1}}u\|_{L_{t,x}^2(I)}+\|P_{\leq K^{\alpha_1}}(|u|^4u)\|_{L_{t,x}^2(I)}\bigg)\\
&\lesssim \eta^3K^{3\alpha_1/2}(K^{2\alpha_1}\eta+K^{3\alpha_1/2}\|u\|_{L_t^\infty L_x^5(I)}^5)\lesssim \eta^4K^{4\alpha_1}.\end{split}\] This implies $\|\rho\|_{H_t^1H_x^2}\lesssim\eta^4K^{7\alpha_1}$, therefore by inserting a suitable time cutoff $\chi(t-m)$ we can write \[\chi(t-m)\rho(t,x)=\chi(t-m)\sum_{|k|\leq K^{\alpha_1}}\int_{\mathbb{R}}d(k,\xi)e^{i(k\cdot x+t\xi)}\,\mathrm{d}\xi,\] with\begin{equation}\label{verylow}\sum_{|k|\leq K^{\alpha_1}}\int_{\mathbb{R}}\langle \xi\rangle^{1/3}|d(k,\xi)|\,\mathrm{d}\xi\lesssim\eta^4K^{7\alpha_1}.\end{equation} Similarly, since we know\[\|\partial_te^{-i\Omega(t)}\|_{L^2(I)}=\|\omega(t)\|_{L^2(I)}\lesssim\|P_{\leq K^{\alpha_1}}u\|_{L_{t}^{\infty}L_x^4}^4\lesssim\eta^4,\] we can write\begin{equation*}\chi(t-m)e^{-i\Omega(t)}=\chi(t-m)\int_{\mathbb{R}}y(\xi)e^{it\xi}\,\mathrm{d}\xi,\end{equation*} with \begin{equation}\label{verylow2}\int_{\mathbb{R}}\langle \xi\rangle^{1/3}|y(\xi)|\,\mathrm{d}\xi\lesssim1.\end{equation} Therefore, with fixed $K_1,\cdots,K_5$, we can reduce to estimating the functions\[h_{k,\xi}(t)=\langle \xi\rangle^{-1/3}\chi(t-m)\int_m^t e^{i(t-t')\Delta}\mathcal{D}\big(e^{i(k\cdot x+\xi t)}P_{K_1}u(t,x)\big)\,\mathrm{d}t',\] where $\chi$ is a suitable cutoff function as above, and have that\begin{multline}\|v_{m,K_1,\cdots,K_5}'\|_{S_{K,J}^{7/2}}+\|v_{m,K_1,\cdots,K_5}''\|_{S_{K,J}^{7/2}}\\\lesssim\eta^4K^{7\alpha_1}\sup_{|k|\lesssim K^{\alpha_1},\xi\in\mathbb{R}}\big(\sup_{5\leq q\leq 12}K^{\frac{5}{q}-\frac{1}{2}}\|h_{k,\xi}\|_{L_{t,x}^{q}[m,m+1]}+\|e^{i(t-m-1)\Delta}h_{k,\xi}(m+1)\|_{S_{K,J}^{7/2}}\big).\end{multline} Notice that by (\ref{embed3}), (\ref{strxsb}) and (\ref{strlong2}),\[\sup_{5\leq q\leq 12}K^{\frac{5}{q}-\frac{1}{2}}\|h_{k,\xi}\|_{L_{t,x}^{q}[m,m+1]}+\|e^{i(t-m-1)\Delta}h_{k,\xi}(m+1)\|_{S_{J,K}^{7/2}}\lesssim \|h_{k,\xi}\|_{X^{1,3/4}},\] and that by (\ref{xsb02}), we have \[\langle \xi\rangle^{1/3}\|h_{k,\xi}\|_{X^{1,3/4}}\lesssim \|\nabla\mathcal{D}P_{K_1}u\|_{L_{t,x}^2}\lesssim\|\mathcal{D}P_{K_1}u\|_{X^{1,1/5}}\lesssim\eta,\] we see that \[\eta^4K^{7\alpha_1}\sup_{5\leq q\leq 12}K^{\frac{5}{q}-\frac{1}{2}}\|h_{k,\xi}\|_{L_{t,x}^{q}[m,m+1]}+\|e^{i(t-m-1)\Delta}h_{k,\xi}(m+1)\|_{S_{J,K}^{7/2}}\lesssim \eta^5 K^{-3\alpha_1}\] if $|\xi|\gtrsim K^{30\alpha_1}$. 

Now if $|\xi|\lesssim K^{30\alpha_1}$, we shall decompose $P_{K_1}u=\mathbb{P}_{\mathcal{C}}u+P_{K_1}\mathbb{P}_{\mathcal{C}'}u$, where \[\mathcal{C}=\bigcup_{0<|k|\lesssim K^{\alpha_1}}\big\{n\in\mathbb{Z}^3:|n|\sim K_1,|Q(n+k)-Q(n)|\lesssim K^{60\alpha_1}\big\}\] and $\mathcal{C}'=\mathbb{Z}^3-\mathcal{C}$. Clearly we have $\#\mathcal{C}\lesssim K^{2+40\alpha_1}$. For the term $\mathbb{P}_\mathcal{C}u$, denote its contribution to $h_{k,\xi}$ by $h_{k,\xi}'$, then we have $\|h_{k,\xi}'\|_{X^{1,3/4}}\lesssim\eta$ as above, and moreover the spatial Fourier transform $\widehat{h_{k,\xi}}$ is supported in a translate of $\mathcal{C}$, so by (\ref{str2}) and the corresponding $X^{s,b}$ estimate, we have (note that, for the $S_{J,K}^{7/2}$ norm, we will reduce it to $\lesssim K^{\gamma}$ intervals of length $1$, losing a factor $K^{\gamma}$ in the process)\begin{multline}\eta^4K^{7\alpha_1}\sup_{5\leq q\leq 12}K^{\frac{5}{q}-\frac{1}{2}}\|h_{k,\xi}'\|_{L_{t,x}^{q}[m,m+1]}+\|e^{i(t-m-1)\Delta}h_{k,\xi}'(m+1)\|_{S_{J,K}^{7/2}}\\\lesssim\eta^4K^{7\alpha_1+\gamma}\bigg(\frac{K^{2+40\alpha_1}}{K^3}\bigg)^{\frac{1}{6}}\|h_{k,\xi}'\|_{X^{1,3/4}}\lesssim \eta^5 K^{-1/6+15\alpha_1},\end{multline} using the fact that $\gamma<\alpha_1$.

Finally, for the term $P_{K_1}\mathbb{P}_{\mathcal{C}'}u:=u^*$, denote its contribution to $h_{k,\xi}$ by $h_{k,\xi}^*$, then we know by (\ref{xsb02}) that \[\|h_{k,\xi}^*\|_{X^{1,3/4}}\lesssim \|\mathcal{D}(e^{i(k\cdot x+\xi t)}u^*)\|_{X^{1,-1/4}},\] which is then bounded by\[\int_{\mathbb{R}\times\mathbb{T}^3}Ke^{i(k\cdot x+\xi t)}\mathcal{D}\overline{v}\cdot u^*\,\mathrm{d}x\mathrm{d}t\] by duality, where $\|v\|_{X^{0,1/4}}\lesssim 1$. By translation we can set $m=0$; by Plancherel, the above can be written as\[K\sum_{|n|\sim K}\int_{\mathbb{R}}\widehat{u^*}(n,\zeta)\overline{\widehat{\mathcal{D}v}(n+k,\zeta+\xi)}\,\mathrm{d}\zeta.\] Note that $|k|\lesssim K^{\alpha_1}$ and $|\xi|\lesssim K^{30\alpha_1}$, so we know that\[\max(|\zeta+Q(n)|,|\zeta+\xi+Q(n+k)|)\geq|Q(n+k)-Q(n)|-O(1)K^{30\alpha_1}\gg K^{60\alpha_1}\] since $n\not\in\mathcal{C}$. Since $\|\mathcal{D}u^*\|_{X^{0,1/6}}\lesssim K^{-1}$ and thus $\|u^*\|_{X^{0,1/6}}\lesssim K^{-1}\min(1,(K/N)^{-s+1})$, and $\|\mathcal{D}v\|_{X^{0,1/4}}\lesssim \max(1,(K/N)^{s-1})$, the above can be bounded by extracting a factor of either $|\zeta+Q(n)|$ or $|\zeta+\xi+Q(n+k)|$ and estimating both factors in $\ell_n^2L_\zeta^2$. This gives that $\|h_{k,\xi}^*\|_{X^{1,3/4}}\lesssim \eta K^{-10\alpha_1}$, and thus \[\eta^4K^{7\alpha_1}\sup_{5\leq q\leq 12}K^{\frac{5}{q}-\frac{1}{2}}\|h_{k,\xi}^*\|_{L_{t,x}^{q}[m,m+1]}+\|e^{i(t-m-1)\Delta}h_{k,\xi}^*(m+1)\|_{S_{J,K}^{7/2}}\lesssim \eta^5 K^{-3\alpha_1}.\] Summing up, we get that \[\|v_{m,K_1,\cdots,K_5}'\|_{S_{K,J}^{7/2}}+\|v_{m,K_1,\cdots,K_5}''\|_{S_{K,J}^{7/2}}\lesssim\eta^5\max(K^{-3\alpha_1,K^{-1/6+15\alpha_1}})\lesssim \eta^5K^{-1/100},\]since we have chosen $\alpha_1=1/100$.

Now, combining the results from (1), (2) and (3), and summing up in $m$ where $0\leq m\leq T'$, we get that\[A_K\lesssim a_K+\eta^5K^{-1/2000}+\eta\sum_M\min\bigg(\frac{K}{M},\frac{M}{K}\bigg)^{1/256}\sum_{0\leq m\leq T'}\sup_{5\leq q\leq 12}M^{\frac{5}{q}-\frac{1}{2}}\|P_M\mathcal{D}u\|_{L_{t,x}^q[m,m+1]\cap J}^{7/2}.\] Since $|J|\leq K^\gamma$, the summation in $m$ with any fixed $M$ can be divided into $O(K/M)^\gamma$ terms if $M\lesssim K$, and $O(1)$ terms if $M\gtrsim K$, such that each term is bounded by $\|P_M\mathcal{D}u\|_{S_{M,J'}^{7/2}}^{7/2}\lesssim A_M^{7/2}$ for some $J'\subset[0,T]$ such that $|J'|\leq M^\gamma$. Therefore we get \[A_K\lesssim a_K+\eta^5K^{-1/2000}+\sum_M\min\bigg(\frac{K}{M},\frac{M}{K}\bigg)^{1/256}\max\bigg(\frac{K}{M},\frac{M}{K}\bigg)^{\gamma}A_M^{7/2}.\] Since $\gamma=1/300$ and $A_K\lesssim 1$, this implies (\ref{double}).

Now that (\ref{double}) is proved, let $c_K=a_K+\eta K^{-1/2000}$, we can use Lemma \ref{aux} to conclude that\[A_K\lesssim\sup_{M}\min\bigg(\frac{M}{K},\frac{K}{M}\bigg)^{1/6000}(a_M+\eta M^{-1/2000}).\] Since $a_M\lesssim\eta$, this immediately implies $A_K\lesssim\eta$; moreover for $K\geq N$ we have\[A_K\lesssim \eta K^{-1/6000}+\sum_M \min\bigg(\frac{M}{K},\frac{K}{M}\bigg)^{1/6000}(M\|P_M\mathcal{D}f\|_{L^2})+\eta(N/K)^{1/6000}.\] Since \[\sum_M (M\|P_M\mathcal{D}f\|_{L^2})^2\sim\|\mathcal{D}f\|_{H^1}^2\lesssim\eta^2,\] by Schur's inequality it is easily seen that \[\sum_{K\geq N}A_K^2\lesssim\eta^2.\] Thus (\ref{nonlinear2}) is proved.
\end{proof}
\begin{rem}\label{transl} The reason we can assume $J=[0,T']$ when $K<N$ is because $a_K=\eta$. When we translate $J$ in time we have to replace $f$ by $u(t)$ where $t$ is the left endpoint of $J$. Since one still has $K\|P_Ku(t)\|_{L^2}\lesssim\eta$, the above proof will carry over to this case.
\end{rem}
With Corollary \ref{coro} and Proposition \ref{nonlctrl}, we can now finish the proof of Theorem \ref{main}.
\begin{proof}[Proof of Theorem \ref{main}] Let $\|u(0)\|_{H^s}=A$. Fix a large enough constant $D$ not depending on $\eta$. In this proof any implicit constant $C$ appearing in $\lesssim$ will be $\ll D$. Let $u$ be a solution to (\ref{nls}) with energy $E[u]\lesssim\eta^2$, as described in Proposition \ref{locale}. Choose $N$ such that $\|u(0)\|_{H^s}\sim\eta N^{s-1}$, then with $\mathcal{D}=\mathcal{D}_N$ one has that $\|\mathcal{D}u(0)\|_{H^1}\lesssim\eta$. By Proposition \ref{locale} and Strichartz, we see that for $T=1$, \begin{equation}\label{finalboot}\sup_{0\leq t\leq T}\|\mathcal{D}u(t)\|_{H^1}\leq D\eta,\quad A_K(T)\leq 1,\end{equation} with $A_K(T)$ defined in Proposition \ref{nonlctrl}. Suppose (\ref{finalboot}) for some $T\leq N^\gamma$, then by Corollary \ref{coro} and Proposition \ref{nonlctrl}, for $t\in[0,T]$ we have $A_K(T)\lesssim\eta\ll 1$. Moreover we have\[\begin{split}E[\mathcal{D}u(t)]-E[\mathcal{D}u(0)]&\lesssim_D\eta^2\sum_{K}\min\bigg(1,\frac{K}{N}\bigg)^{1/6}\sup_{|J|\leq K^{\gamma},J\subset[0,T]}\|P_K\mathcal{D}u\|_{S_{K,J}^{4}}^4\\
&\lesssim_D \eta^2\sum_{K}\min\bigg(1,\frac{K}{N}\bigg)^{1/6}\sup_{|J|\leq K^{\gamma},J\subset[0,T]}\|P_K\mathcal{D}u\|_{S_{K,J}^{7/2}}^4\\
&\lesssim_D \eta^2\sum_{K}\min\bigg(1,\frac{K}{N}\bigg)^{1/6}A_K^4\\
&\lesssim_D \eta^4\sum_{K\lesssim N}\bigg(\frac{K}{N}\bigg)^{1/6}A_K^2+\eta^4\sum_{K\gtrsim N}A_K^{2}\lesssim_D\eta^6.\end{split}\] Since $\eta$ is sufficiently small, this implies that $E[\mathcal{D}u(t)]\leq E[\mathcal{D}u(0)]+O_D(1)\eta^6\leq C\eta^2$, which gives \[\sup_{0\leq t\leq T} \|\mathcal{D}u(t)\|_{H^1}\ll D\eta.\] By bootstrap arguments, this implies that (\ref{finalboot}) remains true up to $T=N^{\gamma}$, which implies $\|u(t)\|_{H^s}\lesssim\eta N^{s-1}$ for $0\leq t\leq T$.

Using time translation and rescaling $N$ by a factor depending on $\eta$, we get the following result with some absolute constant $E$ (which could depend on $\eta$):
\begin{equation}\label{lac}\text{If $\|u(t)\|_{H^s}\leq N^{s-1}$, then for $|t'-t|\leq N^{\gamma}/E$ we have $\|u(t')\|_{H^s}\leq  (EN)^{s-1}$.}\end{equation} Now, for any positive integer $m$ such that $E^{m(s-1)}\geq A$, choose the smallest time $t_m>0$ such that $\|u(t)\|_{H^s}\geq  E^{m(s-1)}$, then by (\ref{lac}) we have\[t_{m+1}-t_m\geq E^{m\gamma-1},\] so in particular $t_m\gtrsim E^{m\gamma}$. Therefore, for each $t>0$, if $\|u(t)\|_{H^s}\gg A$, choosing the biggest $m$ such that $\|u(t)\|_{H^s}\geq  E^{m(s-1)}$, we get that $t\geq t_m\gtrsim E^{m\gamma}$, thus \[\|u(t)\|_{H^s}\leq E^{(m+1)(s-1)}\lesssim E^{m(s-1)}\lesssim t^{\frac{s-1}{\gamma}}.\] The negative times are proved in the same way. Since $\gamma=1/300$ this completes the proof of Theorem \ref{main}.
\end{proof}

\end{document}